\makeatletter \@addtoreset{equation}{section} \makeatother
\newcommand{\BB}{{\mathbb B}}
\newcommand{\FF}{{\mathbb F}}
\newcommand{\RR}{{\mathbb  R}}
\newcommand{\UU}{{\mathbb U}}
\newcommand{\WW}{{\mathbb W}}
\newcommand{\ZZ}{{\mathbb Z}}
\newtheorem{thm}{Theorem}[section]
\newtheorem{prop}{Proposition}[section]
\newtheorem{exmpl}{Example}[section]
\newtheorem{cor}{Corollary}[section]
\begin{document}

\begin{frontmatter}
\title{Estimation of Mean Residual Life}  
\runtitle{Mean Residual Life} 

\begin{aug}
\author{\fnms{W. J.} \snm{Hall}\thanksref{}\ead[label=e2]{hall@bst.rochester.edu}}
\address{Department of Biostatistics \\University of Rochester\\Rochester, NY \\
\printead{e2}}

\author{\fnms{Jon A.} \snm{Wellner}\thanksref{t1}\ead[label=e1]{jaw@stat.washington.edu}}
\ead[label=u1,url]{http://www.stat.washington.edu/jaw/}

\thankstext{t1}{Supported in part by NSF Grant DMS-1104832 and the Alexander von Humboldt Foundation} 
\address{Department of Statistics \\University of Washington\\Seattle, WA 98195-4322\\
\printead{e1}}
\printead{u1}

\runauthor{Hall \& Wellner}

\end{aug}

\begin{abstract}
\cite{MR0471233} 
considered an empirical estimate of the mean residual life function
on a fixed finite interval.  She proved it to be strongly uniformly consistent and 
(when appropriately standardized) weakly convergent to a Gaussian process. 
These results are extended to the whole half line, and the variance of the 
the limiting process is studied.  Also, nonparametric simultaneous confidence 
bands for the mean residual life function are obtained by transforming the 
limiting process to Brownian motion.
\end{abstract}

\begin{keyword}[class=AMS]
\kwd[Primary ]{62P05, 62N05, 62G15};
\kwd[Secondary ]{62G05, 62E20}
\end{keyword}

\begin{keyword}
life expectancy, consistency, limiting Gaussian process, confidence bands
\kwd{consistency}
\kwd{confidence bands}
\kwd{life expectancy}
\kwd{limiting Gaussian process}
\end{keyword}

\end{frontmatter}

\tableofcontents

\bigskip

\section{Introduction and summary}
\label{sec:intro}

This is an updated version of a Technical Report, \cite{Hall-Wellner-79},
     that, although never published, has been referenced repeatedly in the literature:   
    e.g., \cite{MR1416657}, \cite{MR931630}, \cite{MR856407}, \cite{MR1927772}, 
    \cite{MR1792793}, \cite{MR2344641}.
    
Let $X_1, \ldots , X_n$ be a random sample from a continuous d.f. $F$ 
on $\RR^+ = [0,\infty)$ with finite mean $\mu = E(X)$, 
variance $\sigma^2 \le \infty$, and density $f(x) > 0$.  Let $\overline{F} = 1-F$
denote the survival function, let $\FF_n$ and $\overline{\FF}_n$ denote the 
empirical distribution function and empirical survival function respectively, 
and let 
\begin{eqnarray*}
e(x) \equiv e_F (x) \equiv E(X-x | X>x) = \int_x^\infty \overline{F} dI / \overline{F} (x), 
\ \ \ \ 0 \le x < \infty
\end{eqnarray*}
denote the {\sl mean residual life function} or {\sl life expectancy function} 
at age $x$.
We use a subscript $F$ or $\overline{F}$ on $e$ interchangeably, and $I$
denotes the identity function and Lebesgue measure on $\RR^+$.

A natural nonparametric or life table estimate of $e$ is the random function 
$\hat{e}_n $ defined by 
\begin{eqnarray*}
\hat{e}_n (x) = \left \{ \int_x^\infty \overline{\FF}_n dI / \overline{\FF}_n (x) \right \} 
1_{[0, X_{nn})} (x)
\end{eqnarray*}
where $X_{nn} \equiv \max_{1 \le i \le n } X_i$; that is, the average, less $x$, of 
the observations exceeding $x$.  
\cite*{MR0471233} 
studied $\hat{e}_n$ on a fixed finite interval $0 \le x \le T < \infty$.
She proved that $\hat{e}_n$ is a strongly uniformly consistent estimator of $e$ 
on $[0,T]$, and that, when properly centered and normalized, it 
converges weakly to a certain limiting Gaussian process on $[0,T]$.

We first extend 
Yang's (1978) 
results to all of $\RR^+$ by introducing suitable 
metrics.  Her consistency result is extended in Theorem~\ref{ConsistencyTh1} by 
using the techniques of 
\cite{MR0651528,MR0651392}; 
then her weak convergence 
result is extended in Theorem~\ref{ProcessConvergenceThm}  using 
\cite{MR0301846} 
and \cite{MR0651392}.  

It is intuitively clear that the variance of $\hat{e}_n (x)$ is 
approximately $\sigma^2 (x) / n(x)$ where 
\begin{eqnarray*}
\sigma^2 (x) = Var[X-x| X> x]
\end{eqnarray*}
 is the residual variance and $n(x)$ is the number of observations exceeding $x$; 
the formula would be justified if 
these $n(x)$ observations were a random sample of fixed sized $n(x)$ from the 
conditional distribution $P( \cdot | X>x)$.  Noting that 
$\overline{\FF}_n (x) = n(x)/n \rightarrow \overline{F}(x)$ a.s., we would then have
\begin{eqnarray*}
n Var[ \hat{e}_n (x)] = n \sigma^2 (x) / n(x)  \rightarrow \sigma^2 (x) / \overline{F} (x) .
\end{eqnarray*}
Proposition~\ref{prop:VarianceCovarianceProp} and 
Theorem~\ref{ProcessConvergenceThm}  validate this (see (2.4) below):  the variance of 
the limiting distribution of $n^{1/2} ( \hat{e}_n (x) - e(x))$ is precisely 
$\sigma^2 (x) / \overline{F} (x)$.  

In Section~\ref{sec:AlternativeSufficientConditions} 
simpler sufficient conditions for Theorems~\ref{ConsistencyTh1}  
and \ref{ProcessConvergenceThm}  are given and the 
growth rate of the variance of the limiting process for large $x$ is considered;
these results are related to those of 
\cite{MR0359049}.  
Exponential, Weibull, and Pareto examples are considered in Section 4.

In Section~\ref{sec:confidenceBands}, by transforming (and reversing) the time scale and rescaling
the state space, we convert the limit process to standard Brownian motion on 
the unit interval (Theorem~\ref{ReversedProcBMThm}); this enables construction of nonparametric simultaneous
confidence bands for the function $e_F$ (Corollary~\ref{UniformConfBandsCor}).
Application to survival data of guinea pigs subject to infection with 
tubercle bacilli as given by 
\cite{Bjerkedal-60}  
appears in Section~\ref{sec:illustrationOfBands}.

We conclude this section with a brief review of other previous work.
Estimation of the function $e$, and especially the discretized life-table version,
has been considered by Chiang; see pages 630-633 of 
\cite{Chiang-60} 
and page 214 of 
\cite{Chiang-68}.  
(Also see \cite{Chiang-68}, 
page 189, for some 
early history of the subject.)  The basis for {\sl marginal} inference (i.e. at a specific
age $x$) is that the estimate $\hat{e}_n (x)$ is approximately normal with 
estimated standard error $S_k / \sqrt{k}$, where $k = n \overline{\FF}_n (x)$ 
is the observed number of observations beyond $x$ and $S_k$ is the 
sample standard deviation of those observations.  
A partial justification of this is in 
\cite{Chiang-60}, 
page 630, (and is made precise in 
Proposition~\ref{prop:PointwiseAsympNormality} below).  
\cite{Chiang-68}, 
page 214, gives the analogous marginal 
result for grouped data in more detail, but again without proofs;  
note the solumn $S_{\hat{e}_i}$ in his Table 8, page 213, which is based on a 
modification and correction of a variance formula due to 
\cite{Wilson-38}.  
We know of no earlier work on simultaneous inference (confidence
bands) for mean residual life.   
   
A plot of (a continuous version of) the estimated mean residual life function of  
43 patients suffering from chronic gramulocytic leukemia is given 
by 
\cite{MR0253494}.  
\cite{Gross-Clark-75}  
briefly mention the estimation of $e$ in a life - table setting, but 
do not discuss the variability of the estimates (or estimates thereof).
Tests for exponentiality against decreasing mean residual life alternatives have
been considered by 
\cite{MR0395119}.  

\section{Convergence on $\RR^+$; covariance function of the limiting process}
\label{sec:convergenceOnRplus}
Let $\{ a_n \}_{n\ge1}$ be a sequence of nonnegative numbers with $a_n \rightarrow 0$
as $n\rightarrow \infty$.  For any such sequence and a d.f. $F$ as above, set 
$b_n = F^{-1} (1-a_n) \rightarrow \infty$ as $n\rightarrow \infty$.  
Then, for any function $f$ on $\RR^+$, define $f^*$ equal to $f$ for $x \le b_n$ 
and $0$ for $x> b_n$:  $f^* (x) = f(x) 1_{[0,b_n ]} (x)$.  
Let $\| f \|_a^b \equiv \sup_{a \le x \le b} | f(x)|$ and write $\| f \| $ 
if $a=0$ and $b=\infty$.  

Let ${\cal H}(\downarrow)$ denote the set of all nonnegative, decreasing 
functions $h$ on $[0,1]$ for which $\int_0^1 (1/h) dI< \infty$.
\medskip

\par\noindent
{\bf Condition 1a.}  There exists $h \in {\cal H} ( \downarrow)$ such that
\begin{eqnarray*}
M_1 \equiv M_1 (h,F) \equiv \sup_x \frac{\int_x^\infty h(F) dI/ h(F(x))}{e(x)} < \infty .
\end{eqnarray*}
Since $0 < h(0) < \infty$ and $e(0) = E(X) < \infty$, 
Condition 1a implies that $\int_0^\infty h(F) dI < \infty$.
Also note that $h(F)/h(0)$ is a survival function on $\RR^+$
and that the numerator in Condition 1a is simply $e_{h(F)/h(0)}$; 
hence Condition 1a may be rephrased as:  there exists $h \in {\cal H}(\downarrow) $
such that $M_1 \equiv \| e_{h(F)/h(0)} / e_F \| < \infty$.
\medskip

\par\noindent
{\bf Condition 1b.}  
There exists $h \in {\cal H}(\downarrow)$ for which 
$\int_0^\infty h(F) dI< \infty$ and $\| e h(F) \| < \infty$.
\medskip

\par\noindent
Bounded $e_F$ and existence of a moment of order greater than $1$ 
is more than sufficient for Condition 1b (see Section 3).
\medskip

\begin{thm}
\label{ConsistencyTh1}
Let $a_n = \alpha n^{-1} \log \log n$ with $\alpha>1$.  
If Condition 1a holds for a particular $h \in {\cal H} (\downarrow)$, then
\begin{eqnarray}
&& \rho_{h(F)e/ \overline{F}} ( \hat{e}_n^* , e^* ) \nonumber \\
&& \ \ \ \equiv \sup \left \{ \frac{|\hat{e}_n (x) - e(x) | \overline{F} (x)}{h(F(x))e(x)} : \ x \le b_n \right \}
\rightarrow_{a.s.} 0 \  \ \mbox{as} \ \ n \rightarrow \infty .
\label{WeightedConsistencyPart1}
\end{eqnarray}
If Condition 1b holds, then 
\begin{eqnarray}
&& \rho_{1/\overline{F}} ( \hat{e}_n^* , e^* )  \nonumber \\
&& \ \ \ \equiv \sup \{  |\hat{e}_n (x) - e(x) | \overline{F} (x) : \ x \le b_n \}
\rightarrow_{a.s.} 0 \  \ \mbox{as} \ \ n \rightarrow \infty .
\label{WeightedConsistencyPart2}
\end{eqnarray}
\end{thm}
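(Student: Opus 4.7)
The plan is to reduce the weighted uniform convergence statements to two ingredients from \cite{MR0651528,MR0651392}: a weighted uniform convergence bound for the empirical survival function and a companion bound on the ratio $\overline{F}/\overline{\FF}_n$ valid up to the cutoff $b_n$. I would begin with the algebraic decomposition: writing $U_n = \overline{\FF}_n - \overline{F}$ and using $e(x)\overline{F}(x) = \int_x^\infty \overline{F}\, dI$, one obtains
\[
\hat{e}_n(x) - e(x) = \frac{1}{\overline{\FF}_n(x)}\Bigl[\int_x^\infty U_n\, dI \, - \, e(x)\, U_n(x)\Bigr].
\]
After multiplying by $\overline{F}(x)/\{h(F(x))e(x)\}$ two pieces remain to control on $[0,b_n]$: an \emph{integrated} term (A) proportional to $\int_x^\infty U_n\, dI$ and a \emph{pointwise} term (B) proportional to $U_n(x)/h(F(x))$, each carrying an extra factor of the ratio $\overline{F}/\overline{\FF}_n$.

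For (B) I would invoke the ratio LIL of \cite{MR0651392}, which gives $\overline{F}/\overline{\FF}_n$ bounded a.s.\ on $\{F(x) \le 1 - a_n\}$ for the choice $a_n = \alpha n^{-1}\log\log n$, $\alpha > 1$, together with the weighted empirical uniform convergence $\sup_{x \le b_n} |U_n(x)|/h(F(x)) \rightarrow_{a.s.} 0$, which follows from $h \in {\cal H}(\downarrow)$ and the Chibisov--O'Reilly type results of \cite{MR0651528}. For (A) I would use Condition 1a to pull the weighted empirical supremum outside: on $[x,b_n]$,
\[
\Bigl|\int_x^{b_n} U_n\, dI\Bigr| \,\le\, \sup_{y \le b_n} \frac{|U_n(y)|}{h(F(y))} \cdot \int_x^\infty h(F)\, dI \,\le\, \varepsilon_n\, M_1\, h(F(x))\, e(x),
\]
so division by $h(F(x))e(x)$ gives a vanishing contribution. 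On the tail $[b_n,\infty)$ one has $\int_{b_n}^\infty \overline{F}\, dI \rightarrow 0$ (since $F$ has finite mean) and $\int_{b_n}^\infty \overline{\FF}_n\, dI = n^{-1}\sum (X_i - b_n)_+ \rightarrow_{a.s.} 0$ by a Ces\`aro/SLLN argument; the $o(h(F(x))e(x))$ uniformity in $x \le b_n$ reduces to the worst case $x = b_n$, where the integrability $\int_0^1 dI/h < \infty$ forces $a_n/h(1-a_n) \rightarrow 0$.

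The proof of \eqref{WeightedConsistencyPart2} under Condition 1b follows the same decomposition but without the $h(F)e$ normalisation: one writes $|\int_x^\infty U_n\, dI| \le \bigl(\sup_{y \le b_n} |U_n|/h(F)\bigr)\int_0^\infty h(F)\, dI + (\text{tail}) \rightarrow_{a.s.} 0$ using $\int_0^\infty h(F)\, dI < \infty$, and $e(x)|U_n(x)| \le \|e\, h(F)\|\cdot \sup_y |U_n|/h(F) \rightarrow_{a.s.} 0$ using $\|e\,h(F)\| < \infty$, while the factor $\overline{F}/\overline{\FF}_n$ is again absorbed by the ratio LIL.

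The main obstacle I anticipate is the tail term in part (A): the weighted empirical bound breaks down beyond $b_n$ because $h(F(x))$ can be arbitrarily small there, so one must estimate $\int_{b_n}^\infty \overline{\FF}_n\, dI$ and $\int_{b_n}^\infty \overline{F}\, dI$ by non-weighted means and verify that they are negligible relative to the denominator $h(F(x))e(x)/\overline{F}(x)$ uniformly in $x \le b_n$. Threading the LIL rate $a_n = \alpha n^{-1}\log\log n$, $\alpha > 1$, through both the ratio LIL and the weighted empirical LIL of \cite{MR0651528,MR0651392} so that the failure sets have summable probabilities (and Borel--Cantelli applies) is the delicate book-keeping step.
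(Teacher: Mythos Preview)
Your decomposition is exactly the paper's, and your handling of term (B) via the ratio result of \cite{MR0651392} plus the weighted Glivenko--Cantelli of \cite{MR0651528} matches as well. The place where you diverge is the handling of term (A): you split $\int_x^\infty U_n\,dI$ at $b_n$ and then treat the tail $\int_{b_n}^\infty$ separately, which is the source of the ``main obstacle'' you identify. That obstacle is self-created. Theorem~1 of \cite{MR0651528} gives $\rho_{h(F)}(\FF_n,F)=\sup_{y\in\RR^+}|U_n(y)|/h(F(y))\to_{a.s.}0$ on the \emph{whole} half-line, not just on $[0,b_n]$; the condition $h\in{\cal H}(\downarrow)$ is precisely what makes the weighted supremum well-behaved even where $h(F(y))$ is small. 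Hence one bounds the full integral directly:
\[
\Bigl|\int_x^\infty U_n\,dI\Bigr|\;\le\;\rho_{h(F)}(\FF_n,F)\int_x^\infty h(F)\,dI\;\le\;\rho_{h(F)}(\FF_n,F)\cdot M_1\,h(F(x))\,e(x),
\]
using Condition~1a for the last step, with no splitting and no tail term. The paper's proof is literally the two-line inequality
\[
\rho_{h(F)e/\overline F}(\hat e_n^*,e^*)\;\le\;\Bigl\|\tfrac{\overline F}{\overline\FF_n}\Bigr\|_0^{b_n}\,(M_1+1)\,\rho_{h(F)}(\FF_n,F)\;\to_{a.s.}\;0.
\]
The cutoff $b_n$ enters \emph{only} to keep $\overline F/\overline\FF_n$ finite (via Theorem~2 of \cite{MR0651392}); it plays no role in controlling $U_n$. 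Drop the split and the worry about $a_n/h(1-a_n)$, and your proof collapses to the paper's.
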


The metric in (\ref{WeightedConsistencyPart2}) turns out to be a natural one
(see Section~\ref{sec:confidenceBands}); that in (\ref{WeightedConsistencyPart1}) is typically stronger.

\begin{proof}
First note that for $x< X_{nn}$
\begin{eqnarray*}
\hat{e}_n (x) - e(x) 
= \frac{\overline{F}(x)}{\overline{\FF}_n (x) }
     \left \{ \frac{-\int_x^\infty ( \FF_n - F)dI}{\overline{F} (x)} 
     + \frac{e(x)}{\overline{F} (x)} ( \FF_n (x) - F(x) ) \right \} .
\end{eqnarray*}
Hence 
\begin{eqnarray*}
\rho_{h(F)e/ \overline{F}} ( \hat{e}_n^* , e^* ) 
& \le & \bigg \| \frac{\overline{F}}{\overline{\FF}_n } \bigg \|_0^{b_n} 
          \left \{ \sup_x \frac{| \int_x^\infty (\FF_n - F) dI |}{h(F(x))e(x)} 
           + \sup_x \frac{| \FF_n (x) - F(x) |}{h(F(x))} \right \} \\
 & \le & \bigg \| \frac{\overline{F}}{\overline{\FF}_n } \bigg \|_0^{b_n} \cdot 
              \rho_{h(F)} ( \FF_n , F ) (M_1 + 1 ) \\
 & \rightarrow_{a.s.} & 0
 \end{eqnarray*}
using Condition 1a, Theorem 1 of 
\cite{MR0651528}  
to show $\rho_{h(F)} ( \FF_n , F ) \rightarrow_{a.s.} 0$ a.s., 
and Theorem 2 of 
\cite{MR0651392}  
to show that 
$\limsup_n \| \overline{F} / \overline{\FF}_n \|_0^{b_n} < \infty$ a.s..

Similarly, using Condition 1b,
\begin{eqnarray*}
\rho_{1/ \overline{F}} ( \hat{e}_n^* , e^* ) 
& \le & \bigg \| \frac{\overline{F}}{\overline{\FF}_n } \bigg \|_0^{b_n} 
          \left \{ \sup_x | \int_x^\infty (\FF_n - F) dI |  
           + \sup_x  e(x) | \FF_n (x) - F(x) | \right \} \\
 & \le & \bigg \| \frac{\overline{F}}{\overline{\FF}_n } \bigg \|_0^{b_n} \cdot 
              \rho_{h(F)} ( \FF_n , F ) \left ( \int_0^\infty h(F) dI + \| e h(F) \| \right ) \\
 & \rightarrow_{a.s.} & 0 .
 \end{eqnarray*}
\end{proof} 

To extend Yang's weak convergence results, we will use the 
special uniform empirical processes $\UU_n$ of the Appendix of 
\cite{MR0301846} 
or 
\cite{MR838963}  
which converge to a special Brownian bridge 
process $\UU$ in the strong sense that
\begin{eqnarray*}
\rho_q (\UU_n , \UU) \rightarrow_p 0 \ \ \mbox{as} \ \ n \rightarrow \infty 
\end{eqnarray*}
for $q \in {\cal Q} (\downarrow)$, the set of all continuous functions 
on $[0,1]$ which are monotone decreasing on $[0,1]$ and 
$\int_0^1 q^{-2} dI < \infty$.  Thus $\UU_n = n^{1/2} (\Gamma_n - I)$
on $[0,1]$ where $\Gamma_n$ is the empirical d.f. of special uniform 
$(0,1)$ random variables $\xi_1 , \ldots , \xi_n$.

Define the {\sl mean residual life process} on $\RR^+$ by 
\begin{eqnarray*}
n^{1/2} ( \hat{e}_n (x) - e(x)) 
& = & \frac{1}{\overline{\FF}_n (x)} \left \{ - \int_x^\infty n^{1/2} ( \FF_n - F) dI 
            + e(x) n^{1/2} ( \FF_n (x) - F(x)) \right \} \\
& \stackrel{d}{=} & \frac{1}{\overline{\Gamma}_n (F(x))} \left \{ 
              - \int_x^\infty \UU_n (F) dI + e(x) \UU_n (F(x)) \right \} \\
& \equiv & \ZZ_n (x), \ \ \ \ 0\le x < F^{-1} (\xi_{nn} ) 
\end{eqnarray*}
where $\xi_{nn} = \max_{1 \le i \le n} \xi_i$, and 
$\ZZ_n (x) \equiv - n^{1/2} e(x)$ for $x \ge F^{-1} ( \xi_{nn} )$.  
Thus $\ZZ_n$ has the same law as $n^{1/2} (\hat{e}_n - e)$ 
and is a function of the special process $\UU_n$.  Define the corresponding 
limiting process $\ZZ$ by 
\begin{eqnarray}
\ZZ(x) = \frac{1}{\overline{F} (x)} \left \{ - \int_x^\infty \UU (F) dI + e(x) \UU(F(x)) \right \}, 
\ \ \ \ 0 \le x < \infty .
\label{LimitProcess}
\end{eqnarray}
If $\sigma^2 = Var(X) < \infty$ (and hence under either Condition 2a or 2b below), 
$\ZZ$ is a mean zero Gaussian process on $\RR^+$ with covariance function 
described as follows:

\begin{prop}  
\label{prop:VarianceCovarianceProp}
Suppose that $\sigma^2 = Var(X) < \infty$.
For $0 \le x \le y < \infty$
\begin{eqnarray}
Cov [\ZZ(x) , \ZZ(y)] 
= \frac{\overline{F}(y)}{\overline{F}(x)} Var [\ZZ(y)] = \frac{\sigma^2 (y)}{\overline{F}(y)} 
\label{CovarianceFormulaOne}
\end{eqnarray}
where
\begin{eqnarray*}
\sigma^2 (t) \equiv Var[X-t|X>t] 
= \frac{\int_t^\infty (x-t)^2 F(x)}{\overline{F} (t)} - e^2 (t) 
\end{eqnarray*}
is the residual variance function; also 
\begin{eqnarray}
Cov[\ZZ(x) \overline{F}(x) , \ZZ(y) \overline{F}(y)]
= Var[\ZZ(y) \overline{F} (y)] = \overline{F}(y) \sigma^2 (y) .
\label{CovarianceFormulaTwo}
\end{eqnarray}
\end{prop}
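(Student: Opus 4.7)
The plan is to work with the stabilized process $W(x) := \overline{F}(x)\,\ZZ(x) = -\int_x^\infty \UU(F)\,dI + e(x)\,\UU(F(x))$. Equation (\ref{CovarianceFormulaTwo}) is exactly the statement for $W$, and (\ref{CovarianceFormulaOne}) follows by dividing by $\overline{F}(x)\overline{F}(y)$, with its second equality being the specialization $x=y$. This reformulation both removes the singular prefactor $1/\overline{F}(x)$ and exposes an influence-function representation of $W$.

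The central step is the deterministic identity
$$\int_x^\infty \bigl(1_{[t,\infty)}(s) - F(s)\bigr)\,ds = e(x)\overline{F}(x) - (t-x)^+, \qquad t \ge 0,$$
verified by splitting at $s=t$ (handling $t \le x$ and $t > x$ separately). Setting $t = X_i$ and averaging, it recasts the finite-$n$ quantity $W_n(x) := \sqrt{n}\,\overline{F}(x)(\hat e_n(x) - e(x))$ as the i.i.d.\ sum $n^{-1/2}\sum_{i=1}^n \phi_x(X_i)$, with mean-zero influence function
$$\phi_x(t) := (t-x)^+ - e(x)\,1_{(x,\infty)}(t).$$
Since the finite-dimensional distributions of $\UU_n$ converge to those of $\UU$, the centered Gaussian limit $W$ of $W_n$ satisfies $Cov(W(x),W(y)) = Cov(\phi_x(X),\phi_y(X))$. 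For $x \le y$, $1_{X>x}\,1_{X>y} = 1_{X>y}$ reduces the product to
$$\phi_x(X)\phi_y(X) = \bigl((X-x) - e(x)\bigr)\bigl((X-y) - e(y)\bigr)\,1_{X>y}.$$
Writing $(X-x) - e(x) = \bigl((X-y) - e(y)\bigr) + c$ with the constant $c := (y-x) + e(y) - e(x)$, and using $E[(X-y) - e(y) \mid X>y] = 0$, the cross term vanishes on conditioning, leaving
$$E[\phi_x(X)\phi_y(X)] = \overline{F}(y)\,E\bigl[((X-y) - e(y))^2 \bigm| X > y\bigr] = \overline{F}(y)\,\sigma^2(y),$$
which is (\ref{CovarianceFormulaTwo}).

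The main obstacle is the integral identity above: both $\int_x^\infty 1_{[t,\infty)}(s)\,ds$ and $\int_x^\infty F(s)\,ds$ diverge individually when $t > x$, so the centering must be carried through rather than split, which is why a direct double-integration against the Brownian bridge covariance $s \wedge t - st$ is unattractive. Once this identity is in hand, the cancellation of the cross terms --- driven by the centering of $X-y$ given $X>y$ --- produces the strikingly clean formula: the covariance of $W$ at two times depends on the process only through the residual variance at the later one.
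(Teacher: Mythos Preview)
Your argument is correct and the influence-function route is a genuinely different approach from the paper's. One small imprecision: the quantity $W_n(x):=\sqrt{n}\,\overline F(x)(\hat e_n(x)-e(x))$ is \emph{not} literally the i.i.d.\ sum $n^{-1/2}\sum_i\phi_x(X_i)$, because $\hat e_n$ carries $\overline{\FF}_n(x)$ rather than $\overline F(x)$ in its denominator; what equals the sum exactly is the linearized version $-\int_x^\infty \UU_n(F)\,dI + e(x)\,\UU_n(F(x))$. Since this linearized process is precisely the one whose limit (in $\UU_n\to\UU$) is $W$, your covariance identification $Cov(W(x),W(y))=E[\phi_x(X)\phi_y(X)]$ stands; just adjust the labeling of $W_n$.

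For comparison, the paper's proof computes $Cov[\ZZ'(x),\ZZ'(y)]$ by brute force: it expands the product of the two terms in (\ref{LimitProcess}) against the Brownian-bridge covariance $F(z\wedge w)-F(z)F(w)$, obtaining four (single or double) integrals, then splits each $\int_x^\infty$ as $\int_x^y+\int_y^\infty$ and cancels to reach $\int_y^\infty(t-y)^2\,dF(t)-\overline F(y)e^2(y)$. Your approach replaces this bookkeeping by the single-observation representation $W(x)\stackrel{d}{=}\phi_x(X)$ (in the covariance sense) and a one-line conditional-variance computation. The payoff is conceptual: writing $(X-x)-e(x)=\bigl((X-y)-e(y)\bigr)+c$ and using $E[(X-y)-e(y)\mid X>y]=0$ makes transparent \emph{why} the covariance depends only on the later time, i.e., why $\ZZ'$ has reverse-time independent increments (Section~\ref{sec:confidenceBands}). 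The paper's expansion yields the same answer but hides this structure inside the integral cancellations.
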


\begin{proof}  
It suffices to prove (\ref{CovarianceFormulaTwo}).  
Let $\ZZ' \equiv \ZZ \overline{F}$; from (\ref{LimitProcess}) we find
\begin{eqnarray*}
Cov[\ZZ' (x) , \ZZ' (y)] 
& = & e(x) e(y) F(x) \overline{F} (y) - e(x) \int_y^\infty F(x) \overline{F} (z) dz \\
&& \ \ \ - \ e(y) \int_x^\infty (F(y \wedge z) - F(y) F(z) ) dz \\
&& \ \ \ + \ \int_x^\infty \int_y^\infty (F(z \wedge w) - F(z) F(w) ) dz dw .
\end{eqnarray*}
Expressing integrals over $(x,\infty)$ as the sum of integrals over $(x,y)$
and $(y,\infty)$, and recalling the defining formula for $e(y)$, we find that the 
right side reduces to 
\begin{eqnarray*}
\lefteqn{
\int_y^\infty \int_y^\infty (F(z \wedge z) - F(z) F(w) dz dw - e^2 (y) F(y) \overline{F} (y) }\\
&= & \int_y^\infty (t-y)^2 dF(t) - \overline{F} (y) e^2 (y) \\
& = & \overline{F} (y) \sigma^2 (y) 
\end{eqnarray*}
which, being free of $x$, is also $Var[\ZZ' (y)]$.  
\end{proof}

As in this proposition, the process $\ZZ$ is often more easily studied 
through the process $\ZZ' = \ZZ \overline{F}$;  such a study continues
in Section 5.  
Study of the variance of $\ZZ(x)$, namely $\sigma^2 (x)/ \overline{F}(x)$,
for large $x$ appears in Section 3.
\medskip

\par\noindent
{\bf Condition 2a}.  $\sigma^2 < \infty$ and there exists 
$q \in {\cal Q}(\downarrow)$ such that
\begin{eqnarray*}
M_2 \equiv M_2 (q,F) \equiv \sup_x \frac{\int_x^\infty q(F) dI/q(F(x))}{e(x)} < \infty .
\end{eqnarray*}
Since $0 < q(0) < \infty$ and $e(0) = E(X) < \infty$, Condition 2a implies
that $\int_0^\infty q(F) dI < \infty$; Condition 2a may be rephrased as:
$M_2 \equiv \| e_{q(F)/q(0)}/ e_F \| < \infty$ where 
$e_{q(F)/q(0)}$ denotes the mean residual life function for the 
survival function $q(F)/q(0)$.  
\medskip

\par\noindent
{\bf Condition 2b.}  $\sigma^2 < \infty$ and there exists 
$q \in {\cal Q}(\downarrow)$ such that $\int_0^\infty q(F)dI< \infty$. 
\medskip

Bounded $e_F$ and existence of a moment of order greater than $2$ 
is more than sufficient for 2b (see Section~\ref{sec:AlternativeSufficientConditions}).  
\medskip

\begin{thm}  (Process convergence). 
\label{ProcessConvergenceThm} 
Let $a_n \rightarrow 0$, $na_n \rightarrow \infty$.  
If Condition 2a holds for a particular $q \in {\cal Q}(\downarrow)$, then
\begin{eqnarray}
&& \rho_{q(F) e / \overline{F}} ( \ZZ_n^* , \ZZ^*) \nonumber \\
&& \ \ \ \ \equiv \sup \left \{ \frac{| \ZZ_n (x) - \ZZ(x)| \overline{F}(x)}{q(F(x))e(x)} : \ \ 
x \le b_n \right \} \rightarrow_p 0 \ \ \ \mbox{as} \ \ n \rightarrow \infty .
\label{ProcessConvergenceA}
\end{eqnarray}
If Condition 2b holds, then
\begin{eqnarray}
\label{ProcessConvergenceB}
\phantom{blablabla}\rho_{1/\overline{F}} ( \ZZ_n^*, \ZZ^*) 
\equiv \sup \{ | \ZZ_n(x) - \ZZ(x)| \overline{F} (x): \ x \le b_n \} \rightarrow_p 0 
\ \ \ \mbox{as} \ \ n \rightarrow \infty .
\end{eqnarray}
\end{thm}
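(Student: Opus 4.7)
The plan is to mirror the consistency argument of Theorem~\ref{ConsistencyTh1}, with the Glivenko--Cantelli input replaced by the weighted weak convergence $\rho_q(\UU_n,\UU)\to_p 0$ afforded, for every $q\in\mc{Q}(\downarrow)$, by the special construction of \cite{MR0301846}. The first step is the algebraic decomposition
\begin{eqnarray*}
\ZZ_n(x) - \ZZ(x)
= \frac{\overline{F}(x)}{\overline{\Gamma}_n(F(x))}\bigl[\tilde{\ZZ}_n(x) - \ZZ(x)\bigr]
+ \left[\frac{\overline{F}(x)}{\overline{\Gamma}_n(F(x))} - 1\right]\ZZ(x),
\end{eqnarray*}
where $\tilde{\ZZ}_n(x)=\overline{F}(x)^{-1}\{-\int_x^\infty \UU_n(F)\,dI+e(x)\UU_n(F(x))\}$ is the exact analogue of $\ZZ$ with $\UU_n$ in place of $\UU$. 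The first summand isolates the increment $\UU_n-\UU$, while the second captures the denominator error.

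For the first summand, Condition~2a yields, by exactly the bounding used in the consistency proof,
\begin{eqnarray*}
\frac{|\tilde{\ZZ}_n(x)-\ZZ(x)|\,\overline{F}(x)}{q(F(x))\,e(x)}
\le \rho_q(\UU_n,\UU)\left[\frac{\int_x^\infty q(F)\,dI}{q(F(x))\,e(x)}+1\right]
\le \rho_q(\UU_n,\UU)\,(M_2+1).
\end{eqnarray*}
Multiplication by $\|\overline{F}/\overline{\Gamma}_n(F)\|_0^{b_n}$, which is $O_p(1)$ by Theorem~2 of \cite{MR0651392}, then gives convergence to $0$ in probability.

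For the second summand, applying the identical bound with $\UU$ in place of $\UU_n-\UU$ shows that $\sup_x |\ZZ(x)|\,\overline{F}(x)/(q(F(x))e(x)) \le (M_2+1)\|\UU/q\|_0^1$, which is a.s.\ finite by sample-path regularity of the special Brownian bridge. Hence it suffices to check
\begin{eqnarray*}
\sup_{x\le b_n}\left|\frac{\overline{F}(x)}{\overline{\Gamma}_n(F(x))} - 1\right|
= \sup_{s\le 1-a_n}\frac{|\UU_n(s)|}{\sqrt{n}\,\overline{\Gamma}_n(s)}
\to_p 0,
\end{eqnarray*}
which, upon choosing a weight $q_0(s)=(1-s)^{\alpha}\in\mc{Q}(\downarrow)$ with $\alpha<1/2$ sufficiently close to $1/2$, follows from $\|\UU_n/q_0\|_0^1=O_p(1)$ together with the hypothesis $na_n\to\infty$.

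Part~(\ref{ProcessConvergenceB}) is handled by the same decomposition after removing the weight $q(F)e$; the corresponding bound becomes $\rho_q(\UU_n,\UU)(\int_0^\infty q(F)\,dI+\|e\,q(F)\|)$, finite under the sufficient conditions stated after Condition~2b. I expect the principal obstacle to be the denominator step: pinning down the uniform rate at which $\overline{F}/\overline{\Gamma}_n(F)\to 1$ all the way up to $b_n=F^{-1}(1-a_n)$. The assumption $na_n\to\infty$ is essentially the minimal requirement for the weighted empirical-process machinery of \cite{MR0651392} to control this ratio, and if this step goes through then the rest of the proof is a routine assembly of the pieces above.
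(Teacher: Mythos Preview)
Your overall strategy matches the paper's: decompose $\ZZ_n-\ZZ$ into a piece governed by $\UU_n-\UU$ and a piece governed by the ratio error $\overline F/\overline\Gamma_n(F)-1$, then bound each in the weighted metric via Condition~2a (or 2b). Your decomposition places $\ZZ$ in the ratio-error term, whereas the paper places $\ZZ_n^1$ (your $\tilde\ZZ_n$) there; the two are algebraically equivalent rearrangements and require the same ingredients.

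There is, however, a genuine gap in your treatment of the denominator step. Your proposed argument for
\[
\sup_{s\le 1-a_n}\Bigl|\tfrac{1-s}{\overline\Gamma_n(s)}-1\Bigr|
=\sup_{s\le 1-a_n}\frac{|\UU_n(s)|}{\sqrt n\,\overline\Gamma_n(s)}\to_p 0
\]
via a weight $q_0(s)=(1-s)^\alpha$, $\alpha<1/2$, does not go through under the bare hypothesis $na_n\to\infty$. Bounding $|\UU_n|\le \|\UU_n/q_0\|\,q_0$ and using $\overline\Gamma_n(s)\asymp 1-s$ on $[0,1-a_n]$ leads to a requirement of the form $n^{1/2}a_n^{1-\alpha}\to\infty$, i.e.\ $na_n^{2(1-\alpha)}\to\infty$; with $\alpha<1/2$ this is strictly stronger than $na_n\to\infty$ (for instance it fails when $a_n=(\log n)/n$). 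No choice of $q_0\in\mc Q(\downarrow)$ rescues this, since the sharpest admissible weights still leave a logarithmic gap.

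The paper sidesteps this entirely by invoking Theorem~0 of \cite{MR0651392}, which gives $\|\overline I/\overline\Gamma_n-1\|_0^{1-a_n}\to_p 0$ directly under $na_n\to\infty$; that result is proved by a different technique (an exponential bound for binomial tails, not a weighted $\UU_n$ bound). Once you cite Theorem~0, both of your summands are handled: it yields $\|\overline F/\overline\Gamma_n(F)\|_0^{b_n}\to_p 1$ (hence $O_p(1)$) for the first term and the needed $o_p(1)$ for the second. Note also that your appeal to ``Theorem~2 of \cite{MR0651392}'' for the $O_p(1)$ bound is not quite right here---Theorem~2 is an almost-sure statement tied to $a_n=\alpha n^{-1}\log\log n$, whereas the present theorem assumes only $na_n\to\infty$ and works in probability; Theorem~0 is again the correct reference.
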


\begin{proof}
First write 
\begin{eqnarray*}
\ZZ_n (x) - \ZZ(x) 
= \left \{ \frac{\overline{F}(x)}{\overline{\Gamma}_n (F(x))} -1 \right \} \ZZ_n^1 (x) 
     +  (\ZZ_n^1 (x) - \ZZ(x)) 
\end{eqnarray*}
where 
\begin{eqnarray*}
\ZZ_n^1 (x) \equiv \frac{1}{\overline{F}(x)} \left \{ - 
        \int_x^\infty \UU_n (F) dI + e(x) \UU_n (F(x)) \right \}, \ \ \ \ 0 \le x < \infty .
\end{eqnarray*}
Then note that, using Condition 2a, 
\begin{eqnarray*}
\rho_{q(F)e/\overline{F}} (\ZZ_n^1 , 0 ) 
& \le & \sup_x \frac{| \int_x^\infty \UU_n (F) dI }{q(F(x))e(x)} + \rho_q (\UU_n , 0 ) \\
& \le & \rho_q (\UU_n , 0)\{ M_2 +1) \} = O_p (1);
\end{eqnarray*}
that $\| \overline{I}/ \overline{\Gamma}_n -1 \|_0^{1-a_n} \rightarrow_p 0$
by Theorem 0 of Wellner (1978) since $na_n \rightarrow \infty$; and,
again using Condition 2a, that 
\begin{eqnarray*}
\rho_{q(F) e/ \overline{F}} (\ZZ_n^1 , \ZZ) 
& \le & \sup_x \frac{| \int_x^\infty ( \UU_n (F) - \UU(F))dI|}{q(F(x))e(x)} 
            + \rho_q (\UU_n , \UU)\\
& \le & \rho_q (\UU_n , \UU) \{ M_2 +1 \} \rightarrow_p 0 .
\end{eqnarray*}
Hence
\begin{eqnarray*}
\rho_{q(F)e/ \overline{F}} (\ZZ_n^* , \ZZ^* ) 
& \le & \bigg \| \frac{\overline{I}}{\overline{\Gamma}_n} -1 \bigg \|_0^{1-a_n} 
            \rho_{q(F)e/\overline{F}} (\ZZ_n^1, 0) + \rho_{q(F)e/\overline{F}} (\ZZ_n^1, \ZZ)\\
& = & o_p (1) O_p (1) + o_p (1) = o_p (1) .
\end{eqnarray*}
Similarly, using Condition 2b
\begin{eqnarray*}
\rho_{1/\overline{F}} (\ZZ_n^1, 0) 
& \le & \sup_x \bigg | \int_x^\infty \UU_n (F) dI \bigg | + \sup_x e(x)  | \UU_n (F(x)) | \\
& \le & \rho_q (\UU_n , 0) \left \{ \int_0^\infty q(F) dI + \| e q(F) \| \right \} = O_p (1),
\end{eqnarray*}
\begin{eqnarray*}
\rho_{1/\overline{F}} ( \ZZ_n^1, \ZZ) 
& \le & \sup_x \bigg | \int_x^\infty (\UU_n (F) - \UU (F) ) dI \bigg | + \sup_x e(x) | \UU_n (F(x)) - \UU (F(x)) |\\
& \le & \rho_q (\UU_n , \UU ) \left \{ \int_0^\infty q(F) dI + \| e q(F) \| \right \} \rightarrow_p 0,
\end{eqnarray*}
and hence 
\begin{eqnarray*}
\rho_{1/\overline{F}} ( \ZZ_n^* , \ZZ^*) 
& \le & \bigg \| \frac{\overline{I}}{\overline{\Gamma}_n} -1 \bigg \|_0^{1-a_n} 
            \rho_{1/\overline{F}} (\ZZ_n^1 , 0 ) + \rho_{1/\overline{F}} (\ZZ_n^1, \ZZ) \\
& = & o_p (1) O_p (1) + o_p (1) = o_p (1) .
\end{eqnarray*}
\end{proof}

\section{Alternative sufficient conditions; $Var[\ZZ(x)]$ as $x \rightarrow \infty$.}
\label{sec:AlternativeSufficientConditions}
Our goal here is to provide easily checked conditions which will imply the 
somewhat cumbersome Conditions 2a and 2b;  similar conditions also appear 
in the work of 
\cite{MR0359049},  
and we use their results to extend
their formula for the residual coefficient of variation for large $x$ 
((\ref{LimitingVarianceRelToMRLSquared}) below).  
This provides a simple description of the behavior of 
$Var[\ZZ(x)]$, the asymptotic variance of $n^{1/2} (\hat{e}_n (x) - e(x))$
as $x\rightarrow \infty$.
\medskip

\par\noindent
{\bf Condition 3.}  $E(X^r) < \infty$ for some $r>2$.
\smallskip

\par\noindent
{\bf Condition 4a.}  Condition 3 and $\lim_{x \rightarrow \infty}  \frac{d}{dx} (1/\lambda (x))
= c < \infty$ where $\lambda = f/ \overline{F}$, the hazard function.
\smallskip

\par\noindent
{\bf Condition 4b.}  Condition 3 and $\limsup_{x \rightarrow \infty} \{
\overline{F}(x)^{1+\gamma} / f(x) \} < \infty$ for some 
$r^{-1} < \gamma < 1/2$. 
\medskip

\begin{prop}  
\label{RegVariationSuffCond}
If Condition 4a holds, then $0 \le c \le r^{-1}$, Condition 2a holds, 
and the squared residual coefficient of variation tends to $1/(1-2c)$:
\begin{eqnarray}
\lim_{x \rightarrow \infty} \frac{\sigma^2 (x)}{e^2 (x)} = \frac{1}{1-2c} .
\label{LimitingVarianceRelToMRLSquared}
\end{eqnarray}
If Condition 4b holds, then Condition 2b holds.
\end{prop}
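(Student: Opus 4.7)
The plan is to verify Conditions 2a and 2b by exhibiting a concrete member of $\mathcal{Q}(\downarrow)$, namely the one-parameter family $q_\beta(u)=(1-u)^\beta$ with $0<\beta<1/2$. Each $q_\beta$ is continuous and strictly decreasing on $[0,1]$, and $\int_0^1 q_\beta^{-2}\,dI = \int_0^1(1-u)^{-2\beta}\,du <\infty$, so $q_\beta\in\mathcal{Q}(\downarrow)$. With this choice $q_\beta(F(x))=\overline{F}(x)^\beta$, and the finite variance required by 2a and 2b is automatic from Condition 3 since $r>2$.

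Under Condition 4a, I would first extract the tail behaviour of $\overline{F}$. Positivity of $1/\lambda$ forces $c\ge 0$, and L'Hopital applied to $(1/\lambda)(x)/x$ gives $1/\lambda(x)\sim cx$ when $c>0$ and $1/\lambda(x)=o(x)$ when $c=0$; integrating $\lambda$ then shows $\overline{F}$ is regularly varying at infinity of index $-1/c$. Combining this with $E(X^r)<\infty$ forces $c\le 1/r$, and Karamata's theorem applied to the integrals defining $e(x)$ and the residual second moment yields $e(x)\sim cx/(1-c)$ and $\int_x^\infty (t-x)^2\,dF(t)/\overline{F}(x)\sim (cx)^2/((1-c)(1-2c))$; subtracting $e^2(x)$ and dividing gives the coefficient-of-variation limit $1/(1-2c)$ in (\ref{LimitingVarianceRelToMRLSquared}). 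For the careful bookkeeping in this step I would rely on the regular-variation machinery of \cite{MR0359049}.

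To finish verifying Condition 2a, I choose $\beta$ with $c<\beta<1/2$, which exists because $c\le 1/r<1/2$. Then $\overline{F}^\beta$ is regularly varying of index $-\beta/c$ with $\beta/c>1$, and Karamata's theorem gives $\int_x^\infty \overline{F}(t)^\beta\,dt/\overline{F}(x)^\beta\sim x/(\beta/c-1)$ as $x\to\infty$. Dividing by $e(x)\sim cx/(1-c)$, the ratio defining $M_2$ tends to the finite limit $(1-c)/(\beta-c)$; on any compact $[0,T]$ both numerator and $e$ are continuous and strictly positive, so the supremum over $x\le T$ is finite. Combining, $M_2<\infty$. (The case $c=0$ is easier: any $\beta\in(0,1/2)$ suffices and the tail ratio tends to $0$.)

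For Condition 4b, the hypothesis rearranges to $-\overline{F}'(x)\ge \overline{F}(x)^{1+\gamma}/K$ for large $x$, i.e., $(\overline{F}^{-\gamma}/\gamma)'(x)\ge 1/K$; integrating yields $\overline{F}(x)=O(x^{-1/\gamma})$. Choosing $\beta\in(\gamma,1/2)$, which exists since $\gamma<1/2$, gives $q_\beta(F(x))=O(x^{-\beta/\gamma})$ with $\beta/\gamma>1$, so $\int_0^\infty q_\beta(F)\,dI<\infty$, verifying Condition 2b. The main technical obstacle throughout is the sharp coefficient-of-variation limit: both $\sigma^2(x)$ and $e^2(x)$ share the leading order $(cx)^2/(1-c)^2$, and the claimed ratio $1/(1-2c)$ emerges only from careful Karamata bookkeeping of the slowly varying correction factors. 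The remaining computations are routine once the correct $\beta$ window is identified.
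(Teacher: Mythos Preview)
Your overall strategy matches the paper's: both exhibit $q(t)=(1-t)^\beta$ for a suitable $\beta\in(r^{-1},1/2)$ and reduce the question to tail asymptotics of $e_{q(F)}/e_F$. The techniques differ, however. The paper works directly with $\eta=1/\lambda$ and two applications of L'H\^opital,
\[
\lim_{x\to\infty}\frac{\eta(x)}{e_G(x)}=\beta-c,\qquad
\lim_{x\to\infty}\frac{\eta(x)}{e_F(x)}=1-c,
\]
(where $\overline G=\overline F^{\,\beta}$), which immediately gives $c\le\beta$ for every $\beta>r^{-1}$ (hence $c\le r^{-1}$) and $e_G/e_F\to(1-c)/(\beta-c)<\infty$, verifying $M_2<\infty$. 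Your route instead passes through regular variation of $\overline F$ (index $-1/c$) and Karamata. Both arrive at the same limit $(1-c)/(\beta-c)$ when $c>0$, and both defer the coefficient-of-variation limit (\ref{LimitingVarianceRelToMRLSquared}) to \cite{MR0359049}, so on that point the arguments are essentially identical.

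There is one slip. Your claim that for $c=0$ ``the tail ratio tends to $0$'' is false: take $\overline F(x)=e^{-x}$, so $c=0$, $e_F\equiv1$, and $\int_x^\infty \overline F^{\beta}\,dI/\overline F(x)^\beta=1/\beta$; the ratio defining $M_2$ is identically $1/\beta$, not $0$. What is true (and sufficient) is that the ratio tends to the \emph{finite} limit $(1-c)/(\beta-c)$, which for $c=0$ equals $1/\beta$. The paper's L'H\^opital argument delivers this uniformly in $c\in[0,1/r]$; your Karamata argument, as written, only covers $c>0$ cleanly, since $c=0$ corresponds to rapid variation rather than a finite regular-variation index. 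The fix is easy: either invoke the $\Gamma$-variation side of the \cite{MR0359049} machinery for $c=0$, or simply adopt the paper's two L'H\^opital computations, which are more elementary and avoid the case split.

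For Condition~4b the arguments are close. The paper takes $q(t)=(1-t)^\gamma$ with the \emph{given} $\gamma$: integrability of $\overline F^{\gamma}$ comes from Condition~3 (since $r\gamma>1$ gives $\overline F(x)^\gamma=o(x^{-r\gamma})$), and $e(x)\overline F(x)^\gamma$ bounded follows from~4b by L'H\^opital. Your variant---integrating the differential inequality to get $\overline F(x)=O(x^{-1/\gamma})$ and then choosing $\beta\in(\gamma,1/2)$---also works, though note that the paper's proof of Theorem~\ref{ProcessConvergenceThm} actually uses $\|e\,q(F)\|<\infty$ as well, which the paper verifies here but you do not; you may wish to add that check.
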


\begin{cor}
Condition 4a implies
\begin{eqnarray*}
Var[ \ZZ(x)] \sim \frac{e^2 (x)}{\overline{F} (x)} (1- 2c)^{-1} \ \ \ 
\mbox{as} \ \ x \rightarrow \infty .
\end{eqnarray*}
\end{cor}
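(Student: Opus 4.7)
The plan is very short because the corollary is essentially a direct combination of two results already available in the paper. From Proposition~\ref{prop:VarianceCovarianceProp} (specializing (\ref{CovarianceFormulaOne}) to $x=y$) the variance of the limit process is
\begin{eqnarray*}
\text{Var}[\ZZ(x)] \;=\; \frac{\sigma^2(x)}{\overline{F}(x)},
\end{eqnarray*}
so the asymptotic behavior of $\text{Var}[\ZZ(x)]$ is entirely controlled by the asymptotics of the residual variance $\sigma^2(x)$ and of $\overline{F}(x)$.

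Next I would invoke Proposition~\ref{RegVariationSuffCond}: under Condition 4a, the squared residual coefficient of variation satisfies
\begin{eqnarray*}
\lim_{x \to \infty} \frac{\sigma^2(x)}{e^2(x)} \;=\; \frac{1}{1-2c}
\end{eqnarray*}
(and implicitly $0 \le c \le r^{-1} < 1/2$, so the right hand side is a finite positive constant). Writing
\begin{eqnarray*}
\text{Var}[\ZZ(x)] \;=\; \frac{\sigma^2(x)}{\overline{F}(x)}
\;=\; \frac{e^2(x)}{\overline{F}(x)} \cdot \frac{\sigma^2(x)}{e^2(x)}
\end{eqnarray*}
and passing to the limit in the second factor gives precisely
\begin{eqnarray*}
\text{Var}[\ZZ(x)] \;\sim\; \frac{e^2(x)}{\overline{F}(x)} \,(1-2c)^{-1}
\quad\text{as } x \to \infty,
\end{eqnarray*}
which is the claim. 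There is no real obstacle here: the only thing to check is that $c < 1/2$ so that the limiting constant is well-defined and positive, and this is guaranteed by the bound $c \le r^{-1} < 1/2$ supplied by Proposition~\ref{RegVariationSuffCond} together with Condition 3's requirement $r > 2$.
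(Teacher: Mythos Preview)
Your proposal is correct and matches the paper's treatment: the corollary is stated without a separate proof precisely because it is an immediate consequence of combining the variance formula $Var[\ZZ(x)]=\sigma^2(x)/\overline{F}(x)$ from Proposition~\ref{prop:VarianceCovarianceProp} with the limit (\ref{LimitingVarianceRelToMRLSquared}) of Proposition~\ref{RegVariationSuffCond}, exactly as you argue.
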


\begin{proof}
Assume 4a.  Choose $\gamma$ between $r^{-1}$ and $1/2$; define
a d.f. $G$ on $\RR^+$ by $\overline{G} = \overline{F}^{\gamma}$ 
and note that $g/\overline{G} = \gamma f / \overline{F} = \gamma \lambda$.
By Condition 3 $x^r \overline{F}(x) \rightarrow 0$ as $x \rightarrow \infty$ 
and hence $x^{\gamma r} \overline{G}(x) \rightarrow 0$ as $x \rightarrow \infty$. 
Since $\gamma r > 1$, $G$ has a finite mean and therefore 
$e_G (x) = \int_x^\infty \overline{G} dI / \overline{G} (x)$ is well-defined.

Set $\eta = 1 / \lambda  = \overline{F}/f$, and note that 
$\eta (x) \overline{G} (x) \rightarrow 0$ as $x \rightarrow \infty$.  
(If $\limsup \eta(x) < \infty$, then it holds trivially; otherwise, $\eta (x) \rightarrow \infty$
(because of 4a) and $\lim \eta(x) \overline{G} (x) = \lim (\eta(x)/x) (x \overline{G} (x)) 
= \lim \eta '' (x) x \overline{G}(x) = 0$ by 4a and L'Hopital.
Thus by L'Hopital's rule
\begin{eqnarray*}
0 & \le & \lim \frac{\eta(x)}{e_G (x)} 
               = \lim \frac{\eta(x) \overline{G} (x)}{\int_x^\infty \overline{G} dI} \\
& = & \lim \frac{\eta (x) g(x) - \overline{G} (x) \eta' (x)}{\overline{G} (x)} \\
& = & \gamma - \lim \eta' (x) = \gamma - c \ \ \ \mbox{by 4a} .
\end{eqnarray*}
Thus $c \le \gamma$ for any $\gamma > r^{-1}$ and it follows that 
$c \le r^{-1}$.  It is elementary that $c \ge 0$ since $\eta = 1/\lambda$ 
is nonnegative.

Choose $q(t) = (1-t)^{\gamma}$.  Then $\gamma -c> 0$, 
$q \in {\cal Q}(\downarrow)$, and to verify 2a 
it now suffices to show that $\lim (\eta(x) / e_F(x) ) = 1-c < \infty$
since it then follows that 
\begin{eqnarray*}
\lim \frac{e_G (x)}{e_F (x)} = \lim \frac{\eta (x) / e_F (x)}{\eta (x) / e_G (x)} 
= \frac{1-c}{\gamma-c} < \infty .
\end{eqnarray*}
By continuity and $e_G (0) < \infty$,  $0 < e_F (0) < \infty$, 
this implies Condition 2a.  But $r>2$ implies that $x \overline{F}(x) \rightarrow 0$ 
as $x \rightarrow \infty$ so $\eta (x) \overline{F} (x) \rightarrow 0$ and hence
\begin{eqnarray*}
\lim \frac{\eta(x)}{e_F (x)} 
= \lim \frac{\eta \overline{F} (x)}{\int_x^\infty \overline{F} dI} 
= \lim ( 1 - \eta' (x)) = 1-c .
\end{eqnarray*}
That (\ref{LimitingVarianceRelToMRLSquared}) holds will now follows from results of 
\cite{MR0359049},  
as follows:
Their Corollary to Theorem 7 implies that 
$P( \lambda (t) (X-t) > x | X>t) \rightarrow e^{-x}$ if $c=0$ 
and $\rightarrow (1+cx)^{-1/c}$ if $c>0$.  
Thus, in the former case, $F$ is in the domain of attraction of the 
Pareto residual life distribution and its related extreme value distribution.
Then Theorem 8(a) implies convergence of the (conditional) mean 
and variance of $\lambda (t) (X -t)$ to the mean and variance 
of the limiting Pareto distribution, namely $(1-c)^{-1}$ 
and $(1-c)^{-2} (1-2c)^{-1}$.  
But the conditional mean of $\lambda (t) (X_t)$ is simply 
$\lambda (t) e(t)$, so that $\lambda (t) \sim (1-c)^{-1} / e(t)$ 
and (\ref{LimitingVarianceRelToMRLSquared}) now follows.

If Condition 4b holds, let $q(F) = \overline{F}^{\gamma}$ again.
Then $\int_0^{\infty} q(F) dI< \infty$, 
and it remains to show that 
$\limsup\{ e(x) \overline{F} (x)^{\gamma} \} < \infty$.
This follows from 4b by L'Hopital.  
\end{proof}

Similarly, sufficient conditions for Conditions 1a and 1b can be given:
simply replace ``2'' in Condition 3 and ``1/2'' in Condition 4b with ``1'', 
and the same proof works.  Whether (\ref{LimitingVarianceRelToMRLSquared}) 
holds when $r$ in Condition 3 is exactly $2$ is not known.

\section{Examples.}
\label{sec:examples}
The typical situation, when $e(x)$ has a finite limit and Condition 3 holds, 
is as follows:  $e\sim \overline{F}/f \sim f/(-f')$ as $x \rightarrow \infty$ 
(by L'Hopital), and hence 4b, 2b, and 1b hold;  also 
$\eta' \equiv (\overline{F}/ f)' = [ (F/f) (-f/f')] -1 \rightarrow 0$
(4a with $c=0$, and hence 2a and 1a hold), $\sigma (x) \sim e(x) $ 
from (\ref{LimitingVarianceRelToMRLSquared}), and 
$Var[\ZZ] \sim e^2 / \overline{F} \sim (\overline{F}/f)^2 /\overline{F}  \sim 1/(-f')$.  
We treat three examples, not all `typical', in more detail.

\begin{exmpl} 
(Exponential).  
Let $\overline{F} (x) = \exp (-x/\theta)$, $x \ge 0$, with 
$0 < \theta < \infty$.  Then $e(x) = \theta $ for all $x \ge 0$. 
Conditions 4a and 4b hold (for all $r$, $\gamma \ge 0$) with $c=0$, 
so Conditions 2a and 2b hold by Proposition~\ref{RegVariationSuffCond}
with $q(t) = (1-t)^{1/2-\delta}$,  $0 < \delta < 1/2$.  
Conditions 1a and 1b hold with $h(t) = (1-t)^{1-\delta}$, $0 < \delta < 1$.  
Hence Theorems~\ref{ConsistencyTh1} and ~\ref{ProcessConvergenceThm}  hold where now 
\begin{eqnarray*}
\ZZ(x) = \frac{\UU (F(x))}{1-(x)} - \frac{1}{1-F(x)} \int_{F(x)}^1 \frac{\UU}{1-I} dI 
\stackrel{d}{=} \theta \BB (e^{x/\theta} ) , \ \ \ 0 \le x < \infty
\end{eqnarray*}
and $\BB$ is standard Brownian motion on $[0,\infty)$.  
(The process $\BB_1 (t) = \UU (1-t) - \int_{1-t}^1 ( \UU/ (1-I) ) dI$, 
$0 \le t \le 1$, is Brownian motion on $[0,1]$; and with 
$\BB_2 (x) \equiv x \BB_1 (1/x)$ for $1 \le x \le \infty$, 
$\ZZ(x) = \theta \BB_2 (1/\overline{F} (x)) = \theta \BB_2 (e^{x/\theta} )$.)
Thus, in agreement with (\ref{CovarianceFormulaOne}), 
\begin{eqnarray*}
Cov[ \ZZ(x) , \ZZ(y) ] = \theta^2 e^{(x \wedge y)/\theta}, \ \ \ 
0 \le x, y < \infty .
\end{eqnarray*}
An immediate consequence is that 
$\| \ZZ_n^* \overline{F} \| \rightarrow_d \| \overline{F} \| \stackrel{d}{=} 
\theta \sup_{0 \le t \le 1} | \BB_1 (t) |$; 
generalization of this to other $F$'s appears in Section 5. 
(Because of the ``memoryless'' property of exponential $F$, 
the results for this example can undoubtedly be obtained by more
elementary methods.)
\end{exmpl}
 
\begin{exmpl} (Weibull).  
Let $\overline{F} (x) = \exp (-x^{\theta} )$, $x \ge 0$, with $0 < \theta < \infty$.
Conditions 4a and 4b hold (for all $r$, $\gamma >0$) with $c=0$, so 
Conditions 1 and 2 hold with $h$ and $q$ as in Example 1 by Proposition~\ref{RegVariationSuffCond}. 
Thus Theorems~\ref{ConsistencyTh1}  and ~\ref{ProcessConvergenceThm}  hold.  Also, $e(x) \sim \theta^{-1} x^{1-\theta}$ as 
$x \rightarrow \infty$, and hence $Var[Z(x)] \sim \theta^{-2} x^{2(1-\theta)} \exp( x^{\theta})$
as $x \rightarrow \infty$.
\end{exmpl}

\begin{exmpl} (Pareto).
Let $\overline{F} (x) = (1+cx)^{-1/c}$, $x \ge 0$, with $0 < c < 1/2$.
Then $e(x) = (1-c)^{-1} (1+cx)$, and Conditions 4a and 4b hold for $r< c^{-1}$ 
and $\gamma\ge c$  (and $c$ of 4a is $c$).  Thus Proposition~\ref{RegVariationSuffCond} holds with $r>2$
and $c>0$ and $Var[\ZZ(x)] \sim c^{2+(1/c)} (1-c)^{-2} (1-2c)^{-1} x^{2 +(1/c)} $
as $x \rightarrow \infty$.  
Conditions 1 and 2 hold with $h$ and $q$ as in Example 1, and Theorems~\ref{ConsistencyTh1} 
and~\ref{ProcessConvergenceThm}  hold.

If instead $1/2\le c < 1$, then $E(X) < \infty$ but $E(X^2 ) = \infty$, 
and 4a and 4b hold with $1 < r < 1/c \le 2$ and $\gamma\ge c$.  Hence Condition 1 and 
Theorem~\ref{ConsistencyTh1} hold, but Condition 2 (and hence our proof of 
Theorem~\ref{ProcessConvergenceThm}) fails.  
If $c \ge 1$, then $E(X) = \infty$ and $e(x) = \infty$ for all $x \ge 0$.
\end{exmpl}

Not surprisingly, the limiting process $\ZZ$ has a variance which grows quite rapidly, 
exponentially in the exponential and Weibull cases, and as a power $(>4)$ of $x$ 
in the Pareto case.

\section{Confidence bands for $e$.}  
\label{sec:confidenceBands}
We first consider the process $\ZZ' \equiv \ZZ \overline{F}$ on $\RR^+$ 
which appeared in (\ref{CovarianceFormulaTwo}) of Proposition~\ref{prop:VarianceCovarianceProp}.
Its sample analog $\ZZ_n' \equiv \ZZ_n \overline{\FF}_n$ is easily seen to be a 
cumulative sum (times $n^{-1/2}$) of the observations exceeding $x$, 
each centered at $x + e(x)$;  as $x$ decreases the number of terms in the sum 
increases.  Moreover, the corresponding increments 
apparently act asymptotically independently so that $\ZZ_n'$, in reverse time,
is behaving as a cumulative sum of zero-mean independent increments.  
Adjustment for the non-linear variance should lead to Brownian motion. 
Let us return to the limit version $\ZZ'$.

The zero-mean Gaussian process $\ZZ'$ has covariance function 
$Cov[\ZZ' (x) , \ZZ' (y) ] = Var[\ZZ' (x \vee y) ]$ (see (\ref{CovarianceFormulaTwo})); 
hence, when viewed in reverse time, it has independent increments 
(and hence $\ZZ'$ is a reverse martingale).  
Specifically, with $\ZZ'' (s) \equiv \ZZ' (-\log s)$, 
$\ZZ'' $ is a zero-mean Gaussian process on $[0,1]$ with independent 
increments and $Var[\ZZ'' (s)] = Var[\ZZ' (-\log s)] \equiv \tau^2 (s)$.  
Hence $\tau^2$ is increasing in $s$, and, from (\ref{CovarianceFormulaTwo}),
\begin{eqnarray}
\tau^2 (s) = \overline{F} (-\log s) \sigma^2 (-\log s) .
\label{FormulaForTau}
\end{eqnarray}
Now $\tau^2 (1) = \sigma^2 (0) = \sigma^2$, and 
$$
\tau^2 (0) = \lim_{\epsilon \downarrow 0} \overline{F} (-\log \epsilon) \sigma^2 (-\log \epsilon)
= \lim_{x \rightarrow \infty} \overline{F}(x) \sigma^2 (x) = 0
$$
since
\begin{eqnarray*}
0 \le \overline{F} (x) \sigma^2 (x) \le \overline{F}(x) E(X^2 | X> x) 
= \int_x^\infty y^2 dF(y) \rightarrow 0.
\end{eqnarray*}
Since $f(x)>0$ for all $x \ge 0$,  $\tau^2 $ is strictly increasing.  

Let $g$ be the inverse of $\tau^2$; then $\tau^2 (g(t)) = t$, $g(0) = 0$,
and $g(\sigma^2 ) = 1$.  Define $\WW$ on $[0,1]$ by 
\begin{eqnarray}
\WW(t) \equiv \sigma^{-1} \ZZ'' ( g(\sigma^2 t)) = \sigma^{-1} \ZZ' (-\log g(\sigma^2 t)) .
\label{DefnOfW}
\end{eqnarray}

\begin{thm}  
\label{ReversedProcBMThm}
$\WW$ is standard Brownian motion on $[0,1]$.
\end{thm}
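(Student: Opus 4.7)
The plan is to verify, in order, the four characterizing properties of standard Brownian motion on $[0,1]$: mean zero, Gaussian finite-dimensional marginals, covariance $s \wedge t$, and continuous sample paths.

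\textbf{Gaussianity and mean zero.} Since $\ZZ$ is a (pathwise) linear functional of the Brownian bridge $\UU$ by formula (\ref{LimitProcess}), and $\ZZ' = \ZZ \overline{F}$, $\ZZ''(s) = \ZZ'(-\log s)$, and $\WW(t) = \sigma^{-1} \ZZ''(g(\sigma^{2} t))$ are each obtained from $\ZZ'$ by deterministic time changes and scaling, $\WW$ is a zero-mean Gaussian process on $[0,1]$. I would begin by noting that $g$ is well-defined and continuous: $\tau^{2}$ is continuous and strictly increasing with $\tau^{2}(0)=0$ and $\tau^{2}(1)=\sigma^{2}$ (both already verified in the text), so $g:[0,\sigma^{2}]\to[0,1]$ is a strictly increasing homeomorphism, and hence $t\mapsto g(\sigma^{2}t)$ maps $[0,1]$ onto itself.

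\textbf{Value at the endpoints.} At $t=0$, $\WW(0)=\sigma^{-1}\ZZ''(0)=\sigma^{-1}\lim_{x\to\infty}\ZZ'(x)$, which vanishes almost surely because $\mathrm{Var}[\ZZ'(x)]=\overline{F}(x)\sigma^{2}(x)\to 0$ (the limit computation for $\tau^{2}(0)$ in the excerpt). At $t=1$, $\WW(1)=\sigma^{-1}\ZZ'(0)$ has variance $\sigma^{-2}\cdot\overline{F}(0)\sigma^{2}(0)=1$, as required.

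\textbf{Covariance.} This is the key step. From (\ref{CovarianceFormulaTwo}), for $0\le x\le y$, $\mathrm{Cov}[\ZZ'(x),\ZZ'(y)]=\mathrm{Var}[\ZZ'(y)]=\tau^{2}(e^{-y})$ after substituting $y=-\log s_{2}$. Converting to the $\ZZ''$ parametrisation: for $0\le s_{1}\le s_{2}\le 1$ we have $-\log s_{1}\ge -\log s_{2}$, so the "larger time" in the $x$-scale is $-\log s_{1}$ and
\begin{eqnarray*}
\mathrm{Cov}[\ZZ''(s_{1}),\ZZ''(s_{2})]
=\mathrm{Var}[\ZZ'(-\log s_{1})]=\tau^{2}(s_{1})=\tau^{2}(s_{1}\wedge s_{2}).
\end{eqnarray*}
Plugging in $s_{i}=g(\sigma^{2}t_{i})$ for $0\le t_{1}\le t_{2}\le 1$, monotonicity of $g$ gives $g(\sigma^{2}t_{1})\le g(\sigma^{2}t_{2})$, so
\begin{eqnarray*}
\mathrm{Cov}[\WW(t_{1}),\WW(t_{2})]
=\sigma^{-2}\tau^{2}\!\bigl(g(\sigma^{2}t_{1})\bigr)=\sigma^{-2}\cdot\sigma^{2}t_{1}=t_{1}\wedge t_{2}.
\end{eqnarray*}

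\textbf{Continuity of paths.} The Brownian bridge $\UU$ has continuous paths; together with the continuity of $e$, $\overline{F}$, and the integral in (\ref{LimitProcess}), this makes $x\mapsto \ZZ'(x)$ continuous on $[0,\infty)$ a.s. Together with the a.s. convergence $\ZZ'(x)\to 0$ as $x\to\infty$ (variance vanishing plus Gaussianity gives $L^{2}$-convergence, and for Gaussian martingales this upgrades to a.s.; alternatively one can note $\ZZ'$ is a continuous reverse martingale vanishing in the limit), $\ZZ''$ extends continuously to $s=0$. Composing with the continuous time change $t\mapsto g(\sigma^{2}t)$ yields continuity of $\WW$.

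The main obstacle I anticipate is the a.s. continuity at $t=0$, i.e.\ justifying $\ZZ'(x)\to 0$ almost surely (not just in $L^{2}$); the cleanest route is via the reverse-martingale structure of $\ZZ'$ (already pointed out immediately before the theorem), which combined with $L^{2}$-convergence to $0$ forces a.s. convergence to $0$. Once that is in hand, the remaining verification is mechanical bookkeeping with the covariance identity from Proposition \ref{prop:VarianceCovarianceProp}.
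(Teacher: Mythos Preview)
Your proposal is correct and follows essentially the same route as the paper: both arguments rest on the covariance identity (\ref{CovarianceFormulaTwo}) from Proposition~\ref{prop:VarianceCovarianceProp}, together with the Gaussianity inherited from $\UU$, to identify the law of $\WW$. The paper's proof is a single sentence (``$\WW$ is Gaussian with independent increments and $\mathrm{Var}[\WW(t)]=t$ by direct computation''), invoking the independent-increments property of $\ZZ''$ already noted in the text; you instead compute $\mathrm{Cov}[\WW(t_1),\WW(t_2)]=t_1\wedge t_2$ directly, which is of course equivalent for centered Gaussian processes. Your additional care about path continuity at $t=0$ (via the reverse-martingale structure of $\ZZ'$) is a detail the paper omits entirely.
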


\begin{proof}
$\WW$ is Gaussian with independent increments and $Var[\WW (t)] =t$ 
by direct computation.  
\end{proof}

\begin{cor}
\label{FBarWeightedProcessConvergence}
If (\ref{ProcessConvergenceB}) holds, then
\begin{eqnarray*}
\rho (\ZZ_n^{'*}, \ZZ^{'*} ) \equiv \sup_{x \le b_n } | \ZZ_n (x) \overline{\FF}_n (x) - \ZZ(x) \overline{F} (x)|
= o_p (1) 
\end{eqnarray*}
and hence $\| \ZZ_n \overline{\FF}_n \|_0^{b_n} \rightarrow_d \| \ZZ \overline{F} \| 
= \sigma \| \WW \|_0^1 \ \ \ \mbox{as} \ \ n \rightarrow \infty $.
\end{cor}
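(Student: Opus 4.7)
The plan is to establish the uniform approximation $\rho(\ZZ_n^{'*}, \ZZ^{'*}) = o_p(1)$ first, and then read off convergence of the sup norms by the triangle inequality together with a tail estimate for the Gaussian limit process. For the uniform approximation, I would use the splitting
\[
\ZZ_n \overline{\FF}_n - \ZZ \overline{F}
= (\ZZ_n - \ZZ) \overline{F} + \ZZ_n (\overline{\FF}_n - \overline{F}),
\]
observing that the sup of the first summand over $x \le b_n$ is exactly $\rho_{1/\overline{F}}(\ZZ_n^*, \ZZ^*)$, and hence $o_p(1)$ by the hypothesis (\ref{ProcessConvergenceB}). For the second summand I would factor
\[
\ZZ_n(x)\big(\overline{\FF}_n(x) - \overline{F}(x)\big)
= \big(\ZZ_n(x) \overline{F}(x)\big) \cdot \left(\frac{\overline{\FF}_n(x)}{\overline{F}(x)} - 1\right),
\]
and bound each factor separately. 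The bound $|\ZZ_n|\overline{F} \le |\ZZ_n - \ZZ|\overline{F} + |\ZZ|\overline{F}$, combined with (\ref{ProcessConvergenceB}) and the a.s.\ finiteness of $\|\ZZ \overline{F}\|$ (supplied by Theorem~\ref{ReversedProcBMThm}, which identifies $\ZZ \overline{F}$ as a time-changed Brownian motion with continuous paths), gives $\sup_{x \le b_n}|\ZZ_n(x)|\overline{F}(x) = O_p(1)$. For the second factor, the Wellner (1978) bound $\|\overline{I}/\overline{\Gamma}_n - 1\|_0^{1-a_n} \to_p 0$ used in the proof of Theorem~\ref{ProcessConvergenceThm} transfers by the coupling to $\|\overline{F}/\overline{\FF}_n - 1\|_0^{b_n} \to_p 0$, and a short algebraic manipulation then yields $\|(\overline{\FF}_n - \overline{F})/\overline{F}\|_0^{b_n} \to_p 0$.

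For the second statement, the triangle inequality for sup norms gives
\[
\Big| \|\ZZ_n \overline{\FF}_n\|_0^{b_n} - \|\ZZ \overline{F}\|_0^\infty \Big|
\le \rho(\ZZ_n^{'*}, \ZZ^{'*}) + \|\ZZ \overline{F}\|_{b_n}^\infty.
\]
The first term is $o_p(1)$ by what was just shown. For the tail term, the change of variable $s = e^{-x}$ turns $\ZZ \overline{F}$ into the continuous process $\ZZ''$ on $[0,1]$ with $\ZZ''(0) = 0$ (since $\tau^2(0) = 0$, as verified just after (\ref{FormulaForTau})); since $b_n \to \infty$ forces $e^{-b_n} \to 0$, path-continuity at $0$ gives $\sup_{s \le e^{-b_n}}|\ZZ''(s)| \to 0$ a.s. The resulting convergence in probability of the sup norms implies convergence in distribution. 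Finally, $\|\ZZ \overline{F}\|_0^\infty = \|\ZZ''\|_0^1 = \sigma \|\WW\|_0^1$, with the last equality coming from (\ref{DefnOfW}) together with the fact that $t \mapsto g(\sigma^2 t)$ is a continuous bijection of $[0,1]$ onto itself.

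The main obstacle will be the uniform $O_p(1)$ control on $\ZZ_n \overline{F}$: unlike in the proof of Theorem~\ref{ProcessConvergenceThm}, where the intermediate representation $\ZZ_n^1$ allows direct empirical-process estimates through the special process $\UU_n$, here we are handed only the weighted approximation (\ref{ProcessConvergenceB}) and must combine it with sample-path boundedness of the limit $\ZZ \overline{F}$ --- a property which itself requires Theorem~\ref{ReversedProcBMThm} for its justification, so the logical ordering of the three results (the weak limit, the Brownian motion representation, and this corollary) is essential.
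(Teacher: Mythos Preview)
Your proposal is correct and follows essentially the same route as the paper's proof, which is compressed into two sentences: the paper invokes Theorem~0 of \cite{MR0651392} for $\|\overline{\FF}_n/\overline{F}-1\|_0^{b_n}\to_p 0$ and says this together with (\ref{ProcessConvergenceB}) gives the first claim, with the second following from (\ref{DefnOfW}). Your decomposition $(\ZZ_n-\ZZ)\overline{F}+\ZZ_n(\overline{\FF}_n-\overline{F})$ and subsequent bounds are exactly what is needed to unpack that sentence, and your explicit handling of the tail term $\|\ZZ\overline{F}\|_{b_n}^\infty$ fills in a step the paper leaves implicit.
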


\begin{proof}
By Theorem 0 of 
\cite{MR0651392}  
$\| \overline{\FF}_n / \overline{F} -1 \|_0^{b_n} \rightarrow_p 0$ 
as $n\rightarrow \infty$, and this together with (\ref{ProcessConvergenceB}) implies the 
first part of the statement.  The second part follows immediately from the 
first and (\ref{DefnOfW}). 
\end{proof}

Replacement of $\sigma^2$ by a consistent estimate $S_n^2$ (e.g. the sample variance based 
on all observations), and of $b_n$  by $\hat{b}_n = \FF_n^{-1} (1-a_n)$, the $(n-m)-$th order statistic 
with $m = [ n a_n]$, leads to asymptotic confidence bands for $e = e_F$:  

\begin{cor}
\label{UniformConfBandsCor}
Let $0 < a < \infty$, and set 
$\hat{d}_n (x) \equiv n^{-1/2} a S_n / \overline{\FF_n (x)}$. 
 If (\ref{ProcessConvergenceB}) holds, $S_n^2 \rightarrow_p \sigma^2$, and 
$na_n / \log \log n \uparrow \infty$, then, as $n \rightarrow \infty$
\begin{eqnarray}
&& P \left (\hat{e}_n (x)   - \hat{d}_n (x) \le e(x) \le \hat{e}_n (x) + \hat{d}_n (x) \ \ 
\mbox{for all \ } 0 \le x \le \hat{b}_n \right ) \nonumber \\
&& \ \  \rightarrow   Q(a)   
\label{ConfidenceProbConvergence}
\end{eqnarray}
where 
\begin{eqnarray*}
Q(a) & \equiv & P( \| \WW \|_0^1 < a) = \sum_{k=-\infty}^\infty (-1)^k \{ \Phi ((2k+1)a) - \Phi( (2k-1)a) \} \\
& = & 1 - 4 \{ \overline{\Phi} (a) - \overline{\Phi} (3a) + \overline{\Phi} (5a) - \cdots \}
\end{eqnarray*}
and $\Phi$ denotes the standard normal d.f. 
\end{cor}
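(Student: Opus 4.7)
My plan is to rewrite the covering event in terms of the weighted process $\ZZ_n \overline{\FF}_n$ and then combine Corollary~\ref{FBarWeightedProcessConvergence} with Slutsky's theorem. First I would note that, after multiplying through by $n^{1/2} \overline{\FF}_n(x)$, the two-sided inequality $\hat{e}_n(x) - \hat{d}_n(x) \le e(x) \le \hat{e}_n(x) + \hat{d}_n(x)$ is equivalent to $|\ZZ_n(x)| \overline{\FF}_n(x) \le a S_n$. Hence the covering event equals
\begin{equation*}
\{ \hat{A}_n \le a S_n \}, \qquad \hat{A}_n \equiv \sup_{0 \le x \le \hat{b}_n} |\ZZ_n(x)| \overline{\FF}_n(x),
\end{equation*}
and the task reduces to showing $\hat{A}_n / S_n \to_d \| \WW \|_0^1$ and identifying the distribution of the latter.

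Next I would replace the random upper endpoint $\hat{b}_n$ by the deterministic $b_n$. Setting $A_n \equiv \sup_{0 \le x \le b_n} |\ZZ_n(x)| \overline{\FF}_n(x)$, one has $|\hat{A}_n - A_n| \le \sup_{x \in I_n} |\ZZ_n(x) \overline{\FF}_n(x)|$, where $I_n$ is the random interval between $b_n$ and $\hat{b}_n$. The hypothesis $na_n / \log\log n \uparrow \infty$ is strong enough (via an LIL for tail empirical quantiles, e.g.\ consequences of Theorem~0 of Wellner (1978), already invoked in the proof of Theorem~\ref{ProcessConvergenceThm}) to give $\overline{F}(b_n)/\overline{F}(\hat{b}_n) \to_{a.s.} 1$, so both endpoints of $I_n$ go to infinity. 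Combined with Corollary~\ref{FBarWeightedProcessConvergence}, which gives $\rho(\ZZ_n^{'*}, \ZZ^{'*}) = o_p(1)$, and with the fact that the limiting process $\ZZ' = \ZZ \overline{F}$ satisfies $Var[\ZZ'(x)] = \tau^2(e^{-x}) \to 0$ as $x \to \infty$ (so that its sample paths decay to $0$ at infinity, as established in Section~\ref{sec:confidenceBands}), a routine $\varepsilon/2$ estimate then yields $\hat{A}_n - A_n = o_p(1)$.

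Corollary~\ref{FBarWeightedProcessConvergence} moreover provides $A_n \to_d \sigma \| \WW \|_0^1$ directly, so combined with the previous step $\hat{A}_n \to_d \sigma \| \WW \|_0^1$. Slutsky's theorem together with $S_n \to_p \sigma > 0$ then gives $\hat{A}_n / S_n \to_d \| \WW \|_0^1$. Because $\| \WW \|_0^1$ has a continuous distribution function, this yields $P(\hat{A}_n \le a S_n) \to P(\| \WW \|_0^1 \le a) = Q(a)$. The closed-form series for $Q(a)$ in the statement is the classical reflection-principle formula for the two-sided maximum of Brownian motion on $[0,1]$, and no new work is required there.

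The main obstacle is the replacement step $\hat{A}_n \approx A_n$: one has to ensure that the random upper endpoint $\hat{b}_n$ does not inflate the supremum beyond what happens on $[0,b_n]$, and this is precisely where the strengthened assumption $na_n / \log\log n \uparrow \infty$ (in place of the weaker $na_n \to \infty$ used for Theorem~\ref{ProcessConvergenceThm}) is combined with the decay of $\ZZ'$ at infinity. Everything else is a mechanical assembly of Corollary~\ref{FBarWeightedProcessConvergence}, Slutsky's theorem, and continuity of the limiting supremum's law.
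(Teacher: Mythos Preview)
Your proposal is correct and follows essentially the same route as the paper: rewrite the coverage event as $\{\sup_{[0,\hat b_n]} |\ZZ_n|\overline{\FF}_n \le a S_n\}$, invoke Corollary~\ref{FBarWeightedProcessConvergence} together with $S_n\to_p\sigma$ and Slutsky for the version with deterministic endpoint $b_n$, and then argue that $\hat b_n$ may replace $b_n$; the series for $Q(a)$ is simply quoted.

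The one place where your write-up and the paper differ is the endpoint replacement. You appeal to decay of $\ZZ'$ at infinity plus ``consequences of Theorem~0 of Wellner (1978)'' to control the random interval $I_n$; but note that Corollary~\ref{FBarWeightedProcessConvergence} (and the underlying (\ref{ProcessConvergenceB})) only controls $\ZZ_n'$ on $[0,b_n]$, so when $\hat b_n>b_n$ your $\varepsilon/2$ estimate needs an extra word. The paper handles this more directly: it invokes Theorem~4S (not Theorem~0) of Wellner (1978), which with $c_n=2\log\log n/(na_n)\to 0$ yields the almost-sure deterministic bound $\hat b_n\le F^{-1}\bigl((1+\tau c_n^{1/2})(1-a_n)\bigr)$ for all large $n$. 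This sandwiches $\hat b_n$ between two deterministic sequences $b_n'$ each still satisfying $a_n'\to 0$, $na_n'\to\infty$, so Corollary~\ref{FBarWeightedProcessConvergence} applies with $b_n'$ in place of $b_n$ and the replacement is immediate. Your decay-of-$\ZZ'$ idea works too once you insert such a deterministic envelope for $\hat b_n$, but the cleaner citation is Theorem~4S.
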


\begin{proof}
It follows immediately from Corollary~\ref{FBarWeightedProcessConvergence} 
and $S_n \rightarrow_p \sigma > 0$ that 
\begin{eqnarray*}
\| \ZZ_n \overline{\FF}_n \|_0^{b_n} / S_n \rightarrow_d \| \ZZ \overline{F} \|/ \sigma = \| \WW \|_0^1 .
\end{eqnarray*}
Finally $b_n $ may be replaced by $\hat{b}_n$ without harm:  letting $c_n = 2 \log \log /(na_n) \rightarrow 0$
and using Theorem 4S of 
\cite{MR0651392},  
for $\tau>1$ and all $n \ge N(\omega , \tau)$, 
$\hat{b}_n \equiv \FF_n^{-1} (1-a_n) \stackrel{d}{=} F^{-1} (\Gamma_n^{-1} (1-a_n)) 
\le F^{-1} (\{ 1 + \tau c_n^{1/2} \} (1-a_n))$ w.p. 1.  This proves the convergence claimed in the corollary; 
the expression for $Q(a)$ is well-known (e.g. see 
\cite{MR0233396},  
page 79).  
\end{proof}

The approximation $1- 4 \overline{\Phi} (a)$ for $Q(a)$ gives 3-place accuracy for $a>1.4$.  
A short table appears below:

\begin{table}[h]
\caption{$Q(a)$ for selected $a$}
\label{smallQTable}
\begin{tabular}{| c c | c c |}
\hline
$a$ & $Q(a)$ & $a$ & $Q(a)$ \\
\hline
2.807 & .99 & 1.534 & .75 \\
2.241 & .95 & 1.149 & .50 \\
1.960 & .90 & 0.871 & .25  \\
\hline
\end{tabular}
\end{table}

Thus, choosing $a$ so that $Q(a) = \beta$, (\ref{ConfidenceProbConvergence}) provides a two-sided simultaneous confidence
band for the function $e$ with confidence coefficient asymptotically $\beta$. 
In applications we suggest taking $a_n = n^{-1/2}$ so that $\hat{b}_n$ is the 
$(n-m)-$th order statistic with $m = [n^{1/2}]$; we also want $m $ large enough for an adequate 
central limit effect, remembering that the conditional life distribution may be quite skewed.  
(In a similar fashion, one-sided asymptotic bands are possible, but they will be less trustworthy 
because of skewness.)  

Instead of simultaneous bands for all real $x$ one may seek (tighter) bands on $e(x)$ for one or two specific
$x-$values. 
For this we can apply Theorem~\ref{ProcessConvergenceThm}  and 
Proposition~\ref{prop:VarianceCovarianceProp} directly.  
We first require a consistent estimator of the asymptotic variance of 
$n^{1/2} ( \hat{e}_n (x) - e(x))$, namely $\sigma^2 (x) / \overline{F} (x)$.  

\begin{prop}  
\label{prop:VarianceEstConsistent}
Let $0 \le x < \infty$ be fixed and let $S_n^2 (x)$ be the sample variance of those observations 
exceeding $x$.  If Condition 3 holds then 
$S_n^2 (x) / \overline{\FF}_n (x) \rightarrow_{a.s.} \sigma^2 (x) / \overline{F} (x) $. 
\end{prop}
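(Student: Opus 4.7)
The plan is to decompose $S_n^2(x)$ into empirical averages of i.i.d. integrable summands, apply the strong law of large numbers term by term, and conclude by continuous mapping. Let $N_n(x) \equiv n \overline{\FF}_n(x)$ denote the number of sample values exceeding $x$, and set
\begin{eqnarray*}
A_n \equiv \frac{1}{n}\sum_{i=1}^n (X_i-x)^2 1_{\{X_i>x\}}, \ \
B_n \equiv \frac{1}{n}\sum_{i=1}^n (X_i-x) 1_{\{X_i>x\}}, \ \
C_n \equiv \overline{\FF}_n(x).
\end{eqnarray*}
The sample mean of those $X_i$ exceeding $x$, less $x$, is $B_n/C_n$; expanding the square in $S_n^2(x)$ about $x$ yields the identity $(N_n(x)-1)S_n^2(x) = n\bigl(A_n - B_n^2/C_n\bigr)$, whence
\begin{eqnarray*}
\frac{S_n^2(x)}{\overline{\FF}_n(x)} = \frac{A_n - B_n^2/C_n}{C_n\bigl(C_n-1/n\bigr)}.
\end{eqnarray*}

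The next step is to apply the Kolmogorov SLLN to each of $A_n$, $B_n$, $C_n$. Condition 3 implies $EX^2 < \infty$, which is all that is needed to make $(X-x)^2 1_{\{X>x\}}$ integrable; together with the obvious $L^1$ bounds on the summands of $B_n$ and $C_n$, this gives, almost surely,
\begin{eqnarray*}
A_n \to \overline{F}(x)\bigl(\sigma^2(x)+e^2(x)\bigr), \ \ B_n \to \overline{F}(x) e(x), \ \ C_n \to \overline{F}(x).
\end{eqnarray*}
Since $f(x) > 0$ on $\RR^+$ we have $\overline{F}(x) > 0$, so $C_n$ is bounded away from zero eventually and $N_n(x) \to \infty$; in particular $S_n^2(x)$ is well-defined and the denominator in the identity above is positive for all large $n$, with limit $\overline{F}(x)^2$.

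Finally, continuous mapping gives the numerator $A_n - B_n^2/C_n \to \overline{F}(x)(\sigma^2(x) + e^2(x)) - \overline{F}(x) e^2(x) = \overline{F}(x) \sigma^2(x)$, so the ratio converges almost surely to $\overline{F}(x)\sigma^2(x)/\overline{F}(x)^2 = \sigma^2(x)/\overline{F}(x)$, as claimed. I do not anticipate any real obstacle: the argument is essentially the standard algebraic identity for the sample variance combined with three SLLN applications and a continuous mapping. The only point needing a word of care is the well-definedness of $S_n^2(x)$ on the eventually null event $\{N_n(x) \le 1\}$, which is handled by $\overline{F}(x) > 0$.
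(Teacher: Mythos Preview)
Your proof is correct. The algebraic identity, the three SLLN applications, and the continuous mapping step all check out; your handling of the well-definedness issue when $N_n(x)\le 1$ is adequate.

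The route, however, differs from the paper's. The paper writes
\[
S_n^2(x) = \frac{2\int_x^\infty (y-x)\overline{\FF}_n(y)\,dy}{\overline{\FF}_n(x)} - \hat{e}_n^2(x)
\]
and reduces the problem to showing $\int_x^\infty y\,\overline{\FF}_n(y)\,dy \to_{a.s.} \int_x^\infty y\,\overline{F}(y)\,dy$, which it establishes via a weighted Glivenko--Cantelli bound: choosing $h(t)=(1-t)^{\gamma+1/2}$ with $r^{-1}<\gamma<1/2$, it bounds the difference by $\rho_{h(F)}(\FF_n,F)\int_0^\infty I\,h(F)\,dI$ and invokes Theorem~1 of Wellner (1977). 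That argument genuinely uses $r>2$ from Condition~3 to secure $\int_0^\infty I\,h(F)\,dI<\infty$. Your approach, by contrast, bypasses the empirical-process machinery entirely and needs only $E(X^2)<\infty$, so it actually proves the proposition under a weaker hypothesis than Condition~3. The paper's route has the virtue of staying within the weighted-metric framework used throughout Sections~2 and~3; yours is more elementary and self-contained. A minor cosmetic point: the paper's displayed formula for $S_n^2(x)$ corresponds to the $N_n(x)$-denominator version rather than $N_n(x)-1$, but this is asymptotically irrelevant and your $C_n-1/n$ handles either convention.
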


\begin{proof} 
Since $\overline{\FF}_n (x) \rightarrow_{a.s.} \overline{F} (x) > 0$ and 
\begin{eqnarray*}
S_n^2 (x) = \frac{2 \int_x^\infty (y-x) \overline{\FF}_n (y) dy}{ \overline{\FF}_n (x)} - \hat{e}_n^2 (x), 
\end{eqnarray*}
it suffices to show that $\int_x^\infty y \FF_n (y) dy \rightarrow_{a.s.} \int_x^\infty y \overline{F} (y) dy$. 
Let $h(t) = (1-t)^{\gamma+1/2}$ and $q(t) = (1-t)^{\gamma}$ with $r^{-1} < \gamma < 1/2$ 
so that $h \in {\cal H}(\downarrow)$, $q \in {\cal Q}(\downarrow)$, and 
$\int_0^\infty q(F) dI < \infty$ by the proof of Proposition~\ref{RegVariationSuffCond}.  
Then,
\begin{eqnarray*}
\bigg | \int_x^\infty y \overline{\FF}_n (y) dy - \int_x^\infty y \overline{F} (y) dy \bigg | 
\le \rho_{h(F)} ( \FF_n , F) \int_0^\infty I h(F) dI \rightarrow_{a.s.} 0 
\end{eqnarray*}
by Theorem 1 of Wellner (1977) since
\begin{eqnarray*}
\int_0^\infty I h(F) dI = \int_0^\infty (I^2 \overline{F} )^{1/2} q(F) dI < \infty .
\end{eqnarray*}
\end{proof}

By Theorem~\ref{ProcessConvergenceThm}, 
Propositions~\ref{prop:VarianceCovarianceProp} and 
~\ref{prop:VarianceEstConsistent}, and Slutsky's theorem we have:

\begin{prop} 
\label{prop:PointwiseAsympNormality}
Under the conditions of Proposition~\ref{prop:VarianceEstConsistent},
\begin{eqnarray*}
d_n (x) \equiv n^{1/2} ( \hat{e}_n (x) - e(x)) \overline{\FF}_n^{1/2} (x) / S_n (x) 
\rightarrow_d N(0,1) \ \ \ \mbox{as} \ \ n \rightarrow \infty .
\end{eqnarray*}
\end{prop}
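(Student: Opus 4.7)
The plan is to combine the three ingredients cited just before the proposition, using Slutsky's theorem applied pointwise at the fixed $x$. There is essentially no new analytic work to do; the task is to assemble the pieces correctly.

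First I would extract from Theorem~\ref{ProcessConvergenceThm} the pointwise weak convergence $n^{1/2}(\hat{e}_n(x)-e(x)) \to_d \ZZ(x)$ at the fixed $x$. Since Condition~3 is in force (assumed via Proposition~\ref{prop:VarianceEstConsistent}), Proposition~\ref{RegVariationSuffCond} (with $r>2$) supplies Condition~2b, which is exactly the hypothesis needed in (\ref{ProcessConvergenceB}). The metric $\rho_{1/\overline F}$ dominates pointwise evaluation at the fixed $x$ (up to the positive constant $\overline F(x)$), so one indeed gets $n^{1/2}(\hat{e}_n(x)-e(x)) \to_d \ZZ(x)$.

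Second, by Proposition~\ref{prop:VarianceCovarianceProp}, $\ZZ(x)$ is Gaussian with mean zero and variance $\sigma^2(x)/\overline{F}(x)$. Hence
\begin{equation*}
n^{1/2}(\hat{e}_n(x)-e(x)) \to_d N\!\left(0, \sigma^2(x)/\overline{F}(x)\right).
\end{equation*}
Third, by Proposition~\ref{prop:VarianceEstConsistent}, $S_n^2(x)/\overline{\FF}_n(x) \to_{a.s.} \sigma^2(x)/\overline{F}(x)$, which is strictly positive because $f(x)>0$ implies $\overline{F}(x)>0$ and $\sigma^2(x)>0$. Taking square roots and inverting (both continuous maps at the positive limit),
\begin{equation*}
\frac{\overline{\FF}_n^{1/2}(x)}{S_n(x)} \to_{a.s.} \frac{\overline{F}^{1/2}(x)}{\sigma(x)}.
\end{equation*}
Applying Slutsky's theorem to the product of $n^{1/2}(\hat{e}_n(x)-e(x))$ and $\overline{\FF}_n^{1/2}(x)/S_n(x)$ yields
\begin{equation*}
d_n(x) \to_d \frac{\overline{F}^{1/2}(x)}{\sigma(x)}\,\ZZ(x) \sim N(0,1),
\end{equation*}
which is the asserted convergence.

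Since every ingredient has already been established, there is no genuine obstacle; the only mild care needed is to verify that Condition~3 is in fact enough to invoke Theorem~\ref{ProcessConvergenceThm} (via Proposition~\ref{RegVariationSuffCond}) so that (\ref{ProcessConvergenceB}) actually holds, and to note that the uniform weak convergence over $x\le b_n$ specializes to pointwise convergence at any fixed $x$ (for all $n$ large enough that $x\le b_n$, which is automatic since $b_n\to\infty$).
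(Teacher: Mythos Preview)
Your proposal is correct and follows exactly the paper's own route: the paper's proof is the single sentence ``By Theorem~\ref{ProcessConvergenceThm}, Propositions~\ref{prop:VarianceCovarianceProp} and~\ref{prop:VarianceEstConsistent}, and Slutsky's theorem,'' and you have simply unpacked those citations. Your added remark that Condition~3 suffices to invoke (\ref{ProcessConvergenceB}) (via the argument in the proof of Proposition~\ref{RegVariationSuffCond} that $E(X^r)<\infty$, $r>2$, gives $\int_0^\infty \overline{F}^{\gamma}\,dI<\infty$ for $r^{-1}<\gamma<1/2$) fills in the one detail the paper leaves implicit.
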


This makes feasible an asymptotic confidence interval for $e(x)$ (at this particular fixed $x$).
Similarly, for $x< y$, using the joint asymptotic normality of $(d_n (x) , d_n (y))$ 
with asymptotic correlation $ \{ \overline{F} (y) \sigma^2 (y) / \overline{F} (x) \sigma^2 (x) \}^{1/2}$
estimated by 
$$
\{ \overline{\FF}_n (y) S_n^2 (y) / \overline{\FF}_n (x) S_n^2 (x) \} ^{1/2},
$$ 
an asymptotic confidence ellipse for $(e(x), e(y))$ may be obtained.  

\section{Illustration of the confidence bands.}  
\label{sec:illustrationOfBands}
We illustrate with two data sets presented by 
\cite{Bjerkedal-60}  
and briefly mention one appearing in Barlow and Campo (1975).

Bjerkedal gave various doses of tubercle bacilli to groups of $72$ guinea pigs and recorded their 
survival times.  
We concentrate on Regimens 4.3 and 6.6 (and briefly mention 5.5, the only other complete data set 
in Bjerkedal's study M); see Figures 1 and 2 below. 

First consider the estimated mean residual life $\hat{e}_n$, the center jagged line in each figure.
Figure 1 has been terminated at day 200; the plot would continue approximately horizontally, 
but application of asymptotic theory to this part of $\hat{e}_n$, based only the last 23 survival
 times 
 (the last at 555 days), seems unwise.  
 Figure 2 has likewise been terminated at 200 days, omitting only nine survival times 
 (the last at 376 days);  the graph of $\hat{e}_n$ would continue downward.  
 The dashed diagonal line is $\overline{X} -x$;  if all survival times were equal, say $\mu$, 
 then the residual life function would be $(\mu - x)^+$, a lower bound on $e(x)$ near the origin.  
 More specifically, a Maclaurin expansion yields
 \begin{eqnarray*}
 e(x) = \mu + (\mu f_0 -1) x + (1/2)\{ (2 \mu f_0 -1) f_0 + f_0' \} x^2 + o(x^2)
 \end{eqnarray*}
 where $f_0 = f(0)$, $f_0' = f' (0)$, if $f'$ is continuous at $0$, or 
 \begin{eqnarray*}
 e(x) = \mu - x + \frac{\mu d}{r!} x^r + o(x^r) 
 \end{eqnarray*}
 if $f^{(s)} (0) = 0$ for $s< (r-1)$ ($\ge 0$) and $= d$ for $s = r-1$ 
 (if $f^{(r-1)}$ is continuous at $0$).  It thus seems likely from 
 Figures 1 and 2 that in each of these cases either $f_0 = 0$ and $f_0' > 0$ 
 or $f_0 $ is near $0$ (and $f_0'\ge 0$).  
 
 Also, for large $x$,  $e(x) \sim 1/\lambda (x)$, and Figure 1 suggests that the 
 corresponding $\lambda $ and $e$ have finite positive limits, whereas the $e$ 
 of Figure 2 may eventually decrease ($\lambda$ increase).  
 We know of no parametric $F$ that would exhibit behavior quite like these.  
 
 The upper and lower jagged lines in the figures provide $90\%$ (asymptotic) 
 confidence bands for the respective $e$'s, based on (\ref{ConfidenceProbConvergence}).  
 At least for Regimen 4.3, a constant $e$ (exponential survival) can be rejected.
 
 The vertical bars at $x=0$, $x=100$, and $x=200$ in Figure 1, and at 
 $0$, $50$, and $100$ in Figure 2, are $90\%$ (asymptotic) pointwise confidence 
 intervals on $e$ at the corresponding $x-$values (based on 
 Proposition~\ref{prop:PointwiseAsympNormality}). 
 Notice that these intervals are not much narrower than the simultaneous bands early 
 in the survival data, but are substantially narrower later on.  
 
 A similar graph for Regimen 5.5 (not presented) is somewhat similar to that 
 in Figure 2, with the upward turn in $\hat{e}_n$ occurring at 80 days 
 instead of at $50$, and a possible downward turn at somewhere around 250 days
 (the final death occurring at 598 days).
 
 A similar graph was prepared for the failure data on 107 right rear tractor brakes presented 
 by 
 \cite{MR0448771}, 
 page 462.  
 It suggests a quadratic decreasing $e$ for the first 1500 to 2000 hours (with $f(0) $ at or near $0$
 but $f'(0)$ definitely positive), with $\overline{X} = 2024$, and with 
 a possibly constant or slightly increasing $e$ from 1500 or so to 6000 hours.
 The $e$ for a gamma distribution with $\lambda =2$ and $\alpha = .001$ ($e(x) = \alpha^{-1} (\alpha x +2 )/ (\alpha x +1)$
 with $\alpha = .001$) fits reasonably well -- i.e. is within the confidence bands, even for $25\%$ confidence. 
 Note that this is in excellent agreement with Figures 2.1(b) and 3.1(d) of 
 \cite{MR0448771}.  
 Bryson and Siddiqui's (1969) 
 data set was too small ($n=43$) for these asymptotic methods, except 
 possibly early in the data set.)  
 
 \begin{figure}
\centering
\includegraphics[width=1.00\textwidth]{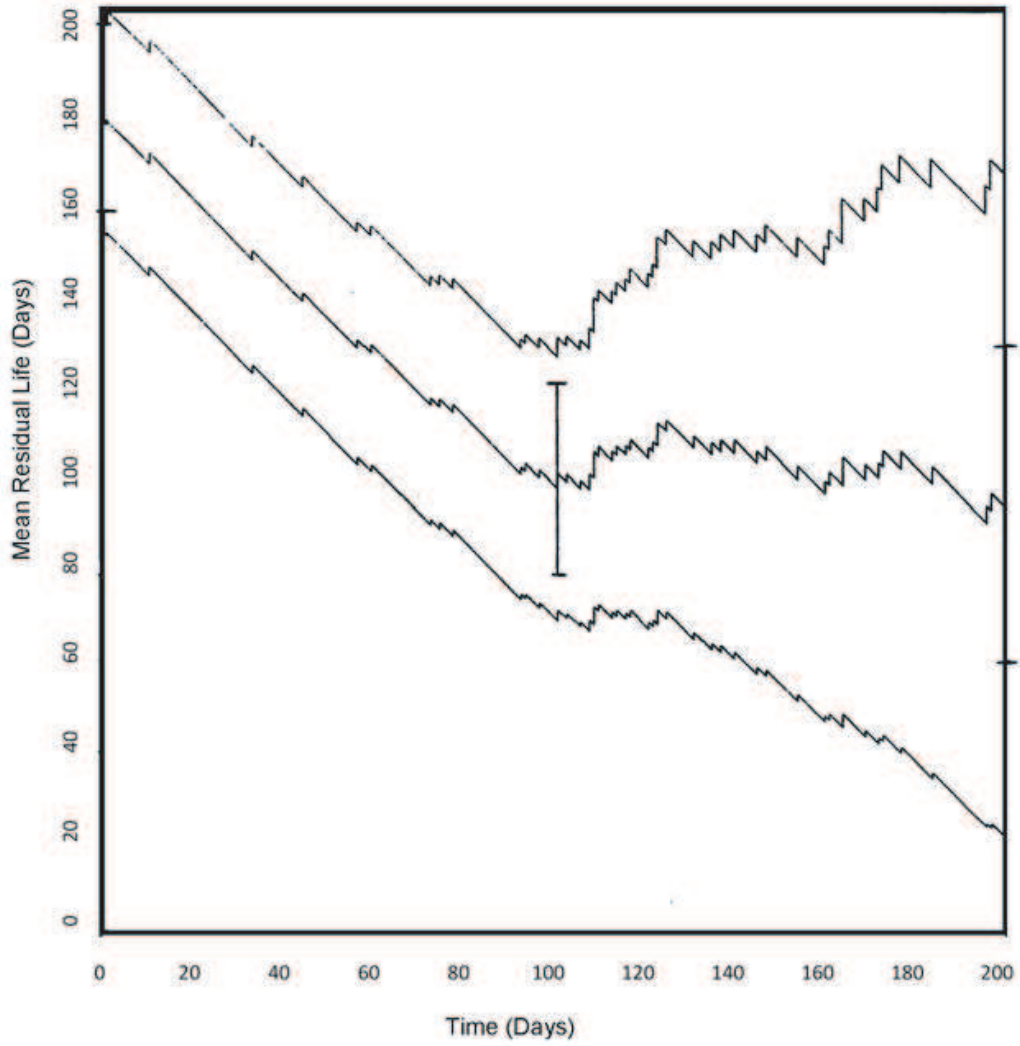}
\caption{$90\%$ confidence bands for mean residual life; Regimen 4.3}
\label{fig:Regimen4-3}
\end{figure}

\begin{figure}
\centering
\includegraphics[width=1.00\textwidth]{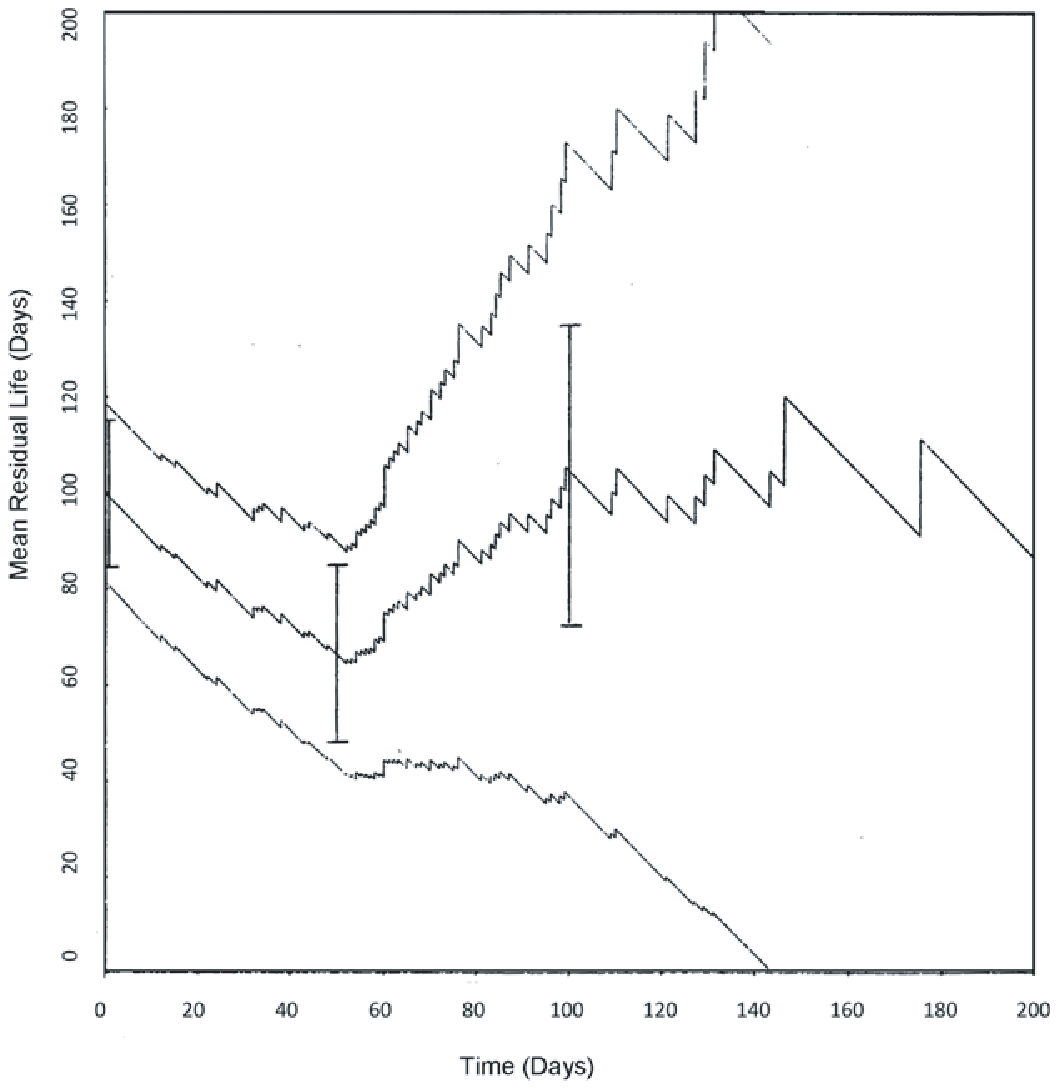}
\caption{$90\%$ confidence bands for mean residual life; Regimen 6.6}
\label{fig:Regimen6-6}
\end{figure}
 
 \section{Further developments}
 \label{sec:furtherDevelopments}
The original version of this paper, \cite{Hall-Wellner-79}, ended with a one-sentence sketch of 
two remaining problems:  
``Confidence bands on the difference between two mean residual life functions, and for the case of censored data,
 will be presented in subsequent papers.''   Although we never did address these questions ourselves, others
 took up these further problems.  
 
 Our aim in this final section is to briefly survey some of the developments since 1979 concerning 
 mean residual life, including related studies of median residual life and other quantiles, as well as developments
 for  censored data, alternative inference strategies, semiparametric models involving mean or median residual life,
 and generalizations to higher dimensions.  
 For a review of further work up to 1988 see \cite{Guess-Proschan:88}.
 
\subsection{Confidence bands and inference}
\cite{MR856407} gave a further detailed study of the asymptotic behavior 
of the mean residual life process as well as other related processes including the Lorenz curve.
\cite{MR931630} developed tests and confidence sets for comparing two mean residual life functions
based on independent samples from the respective populations.  These authors also gave a brief 
treatment based on comparison of {\sl median residual life}, to be discussed in Subsection~\ref{subsec:MedianRL}
below.
 \cite{MR1416657} introduced weighted metrics into the study of the asymptotic behavior of the
 mean residual life process, thereby avoiding the intervals $[0,x_n]$ changing with $n$ involved in our
 Theorems 2.1 and 2.2, and thereby provided confidence 
 bands and intervals for $e_F$ in the right tail.  
 \cite{MR2256261} introduced empirical likelihood methods to the study of the mean residual life function.
 They obtained confidence intervals and confidence bands for compact sets $[0,\tau]$ with 
 $\tau < \tau_F \equiv \inf\{ x : F(x) = 1\}$.
      
\subsection{Censored data}
      \label{subsec:CensoredMRL}
      
      \cite{MR0458678} initiated the study of estimated mean residual life under random right censorship. 
      She used an estimator $\hat{F}_n$ which is asymptotically equivalent to the Kaplan - Meier estimator 
      and considered, in particular,
       the case when $X$ is bounded and stochastically smaller than the censoring variable $C$.
      In this case she proved that $\sqrt{n}(\hat{e} (x) - e(x))$ converges weakly (as $n\rightarrow \infty$) 
      to a Gaussian process with mean zero. 
      \cite{MR1416657} give a brief review of the challenges involved in this problem; see their page 1726.
      \cite{MR2344641} extended their earlier  study (\cite{MR2256261}) of empirical likelihood methods to this case,
      at least for the problem of obtaining pointwise confidence intervals.  The empirical likelihood methods
      seem to have superior coverage probability properties in comparison to the Wald type intervals which follow from 
      our Proposition~\ref{prop:PointwiseAsympNormality}.   
        \cite{MR1678973} introduced smooth estimates of mean residual life in the uncensored case.  In
      \cite{MR2459252} they introduce and study smooth estimators of $e_F$ based on corresponding smooth estimators 
         of $\overline{F} = 1-F$ introduced by \cite{MR1661441}. 

\subsection{Median and quantile residual life functions} 
       \label{subsec:MedianRL}
  
  Because mean residual life is frequently difficult, if not impossible, to estimate in the 
  presence of right-censoring, it is natural to consider surrogates for it which do not depend
  on the entire right tail of $F$.  Natural replacements include median residual life and 
  corresponding {\sl residual life quantiles}.  
   The study of median residual life was apparently initiated in \cite{MR628933}.  
   Characterization issues and basic properties have been investigated by
     \cite{MR732677},  
     \cite{MR756012}, and \cite{MR2131870}.  
     \cite{MR764974}  proposed comparisons of two populations based on their corresponding 
      median (and other quantile) residual life functions.  
      As noted by Joe and Proschan, 
      ``Some results differ notably from corresponding results for the mean residual life function''. 
      \cite{MR2422830} investigated estimation of median residual life with  right-censored data for 
      one-sample and two-sample problems.  They provided an interesting illustration of
       their methods using a long-term follow-up 
      study (the National Surgical Adjuvant Breast and Bowel Project, NSABP) involving breast cancer patients.  
     
\subsection{Semiparametric models for mean and median residual life}

     \cite{MR1064816} 
             investigated a characterization 
             related to  a {\sl proportional mean residual life} model:  $e_G = \psi e_F$ with $\psi>0$.
     \cite{MR1278221} 
          studied several methods of estimation in a semiparametric 
          regression version of the proportional mean residual life model,  
          $e(x|z) = \exp (\theta^T z) e_0 (x) $ where $e(x|z)$ denotes the conditional mean
          residual life function given $Z = z$.   
     \cite{MR2135857} 
          provide a nice review of various models and study estimation 
          in the same semiparametric proportional mean residual life regression model
          considered by   \cite{MR1278221}, but in the presence of right censoring.  Their
          proposed estimation method involves  inverse probability of censoring weighted (IPCW) 
          estimation methods  (\cite{MR0053460};  
          \cite{Robins-Rotnitzky:92}).  
      \cite{MR2158607} use counting process methods to develop alternative estimators 
           for the proportional mean residual life model in the presence of right censoring.  
          The methods of estimation considered by \cite{MR1278221},  \cite{MR2135857}, and  \cite{MR2158607}
          are apparently inefficient.
     \cite{MR2125050}  
          consider information calculations and likelihood based estimation in a two-sample version 
          of the proportional mean residual life model.  Their calculations suggest that certain weighted ratio-type
          estimators may achieve asymptotic efficiency, but a definitive answer to the issue of efficient estimation
          apparently remains unresolved. 
     \cite{MR2278085} proposed an alternative additive semiparametric regression model involving mean 
            residual life.
     \cite{MR2753008}  considered a large family of semiparametric regression models which includes 
           both the additive model proposed by \cite{MR2278085} and the proportional mean residual life 
           model considered by earlier authors, but advocated replacing mean residual life by median residual life.         
     \cite{MR2155475} also developed a median residual life regression model with additive structure and took a 
           semiparametric Bayesian approach to inference.  
    
\subsection{Monotone and Ordered mean residual life functions}
     
     \cite{MR1792793} consider estimation of $e_F$ subject to the shape restrictions that 
     $e_F$ is increasing or decreasing.  The main results concern ad-hoc estimators that are simple monotizations
     of the basic nonparametric empirical estimators $\hat{e}_n$ studied here.  These authors show that the 
     nonparametric maximum likelihood estimator does not exist in the increasing MRL case and that although 
     the nonparametric MLE exists in the decreasing MRL case, the estimator is difficult to compute.  
     \cite{MR1238352} and \cite{MR1927772} study estimation of two mean residual life functions $e_F$ and $e_G$ in 
      one- and two-sample settings 
      subject to the restriction $e_F (x) \le e_G(x)$ for all $x$. \cite{MR1927772} also develop large sample confidence 
      bands and intervals to accompany their estimators. 

\subsection{Bivariate residual life} 

\cite{MR663455} defined a multivariate mean residual life function and 
showed that it uniquely determines the joint multivariate distribution, extending
the known univariate result of \cite{MR0153061}; 
see \cite{MR665274} for a review of univariate results of this type.
See \cite{MR1412096, MR1649088} for further multivariate characterization results.
\cite{MR1941419} introduced a bivariate mean residual life function
and propose natural estimators.  
\medskip

\noindent {\bf Remarks:}
This revision was accomplished jointly by the authors in 2011 and 2012. 
The first author passed away in October 2012.   Section 7 only covers further developments
until 2012.  A MathSciNet search for ``mean residual life'' over the period 2011 - 2017 yielded 
148 hits on 10 July 2017.


\end{document}